\documentclass[a4paper,11pt]{article}
\usepackage{graphicx}
\usepackage{amssymb}
\usepackage{amsthm}
\usepackage{amsfonts}
\usepackage{amsmath}
\usepackage{hyperref}
\usepackage{xcolor}

\title{Negative Avoiding Sequences}
\author{Chris J Mitchell and Peter R Wild}
\date{23rd April 2026}
\theoremstyle{plain}
\newtheorem{lemma}{Lemma}[section]
\newtheorem{theorem}[lemma]{Theorem}

\theoremstyle{definition}
\newtheorem{definition}{Definition}[section]
\newtheorem{construction}{Construction}[section]
\newtheorem{example}{Example}[section]

\theoremstyle{remark}
\newtheorem{remark}{Remark}[section]

\begin{document}

\maketitle

\begin{abstract}
Negative avoiding sequences of span $n$ are periodic sequences of elements from $\mathbb{Z}_k$ for
some $k$ with the property that no $n$-tuple occurs more than once in a period and if an $n$-tuple
does occur then its negative does not.  They are a special type of cut-down de Bruijn sequence with
potential position-location applications.  We establish a simple upper bound on the period of such
a sequence, and refer to sequences meeting this bound as \emph{maximal} negative avoiding
sequences. We then go on to demonstrate the existence of maximal negative avoiding sequences for
every $k\geq3$ and every $n\geq2$.
\end{abstract}

\section{Introduction} \label{section:introduction}

The main focus of this paper is a special class of infinite periodic sequences with elements from
$\mathbb{Z}_k$, for some $k>2$ --- which we refer to as $k$-ary sequences. We are concerned with
$n$-tuples (or \emph{factors}), i.e.\ ordered strings of $n$ symbols, that are equal to $n$
consecutive symbols from a sequence.  We refer to an $n$-tuple \emph{occurring} in a sequence if
the $n$-tuple equals $n$ consecutive symbols occurring somewhere in the sequence; analogously we
refer to a sequence \emph{containing} an $n$-tuple if that $n$-tuple occurs somewhere in the
sequence.  For a sequence $S = (s_i)$ and $n\geq1$ we write $\mathbf{s}_n(i) =
(s_i,s_{i+1},\ldots,s_{i+n-1})$, for the $n$-tuple corresponding to a substring or factor of $n$
consecutive symbols occurring in the sequence at positions $i,i+1,\ldots,i+n-1$.

If a sequence $S=(s_i)$ has period $m$ then we represent the sequence as $[s_0,s_1,\dots,s_{m-1}]$,
i.e.\ a single period of the sequence contained in square brackets.  In much of the literature
$[s_0,s_1,\dots,s_{m-1}]$ is described as a \emph{cycle}.

We refer to the sequences studied here as \emph{Negative Avoiding Sequences (NASs)}.  Informally, a
Negative Avoiding Sequence of span $n$ is a periodic sequence of elements from $\mathbb{Z}_k$ for
some $k>2$, with two special properties:
\begin{itemize}
\item every possible $n$-tuple of length $n$ occurs at most once in a period of the sequence;
    and
\item if an $n$-tuple $\mathbf{a}=(a_0,a_1,\dots,a_{n-1})$ occurs in the sequence, then
    $(-a_0,-a_1,\dots,-a_{n-1})$ does not occur in the sequence.
\end{itemize}

More formally, we can make the following definitions.

\begin{definition}[\cite{Alhakim24a}]
A $k$-ary \emph{$n$-window sequence $S = (s_i)$} is a periodic sequence of elements from
$\mathbb{Z}_k$ ($k>1$, $n>1$) with the property that no $n$-tuple occurs more than once in a
period of the sequence, i.e.\ with the property that if $\mathbf{s}_n(i) = \mathbf{s}_n(j)$ for
some $i,j$, then $i \equiv j \pmod m$ where $m$ is the period of the sequence.
\end{definition}

\begin{remark}
A binary $n$-window sequence is also known as a cut-down de Bruijn Sequence --- see, for example,
Jackson et al.\ \cite[Problem 477]{Jackson09} and Cameron et al.\ \cite{Cameron25}.
\end{remark}

Note that the well-studied \emph{de Bruijn sequences} of span $n$ (see, for example, \cite[Section
7.2.1.1]{Knuth11}) are simply $n$-window sequences in which every possible $n$-tuple occurs exactly
once in a period.  Such sequences clearly have period $k^n$.

We now make the following key definition.

\begin{definition}
Suppose $k\geq3$ and $n\geq1$. If $\mathbf{a}=(a_0,a_1,\dots,a_{n-1})$ is a $k$-ary $n$-tuple, then
we define its negative in the obvious way, i.e.\
\[ -\mathbf{a}=(-a_0,-a_1,\dots,-a_{n-1}). \]
\end{definition}

The special class of $n$-window sequences of interest here can now be defined.

\begin{definition}
Suppose $k\geq3$ and $n\geq 2$.  A $k$-ary \emph{Negative Avoiding Sequence (NAS)} of span $n$ is
an infinite periodic $n$-window sequence $S=(s_i)$, $i\geq0$, with elements from $\mathbb{Z}_k$
with the property that if the $n$-tuple $\mathbf{a}$ occurs in $S$, then $-\mathbf{a}$ does not
occur in $S$. We refer to such a sequence a $\mathcal{NAS}_k(n)$.
\end{definition}

 If a $k$-ary $n$-tuple $\mathbf{a}$ satisfies $\mathbf{a}=-\mathbf{a}$ then we call it
\emph{self-negative}.  Clearly a $\mathcal{NAS}_k(n)$ cannot contain any self-negative $n$-tuples.

The remainder of the paper is structured as follows. Motivation for the work and background
definitions are provided in Sections~\ref{section:motivation} and \ref{section:deBruijn}. This is
followed in Section~\ref{section:simple_bound} by an upper bound on the period of negative avoiding
sequences.  Results establishing the existence of maximal period negative avoiding sequences are
provided in Sections \ref{section:maximal_oddn}, \ref{section:maximal_n2} and
\ref{section:maximal_neven}. The paper concludes in Section~\ref{section:conclusions} with a
summary and a discussion of open research questions.

\section{Motivation and related work}  \label{section:motivation}

De Bruijn sequences have a range of real-world applications, ranging from position location on a
linear track or rotating shaft \cite{Burns92,Burns93,Peng23,Petriu98,Yamaguchi05}, to genome
assembly \cite{Compeau11}. As observed by Cameron et al.\ \cite{Cameron25}, `for some applications
it may be more convenient to produce a cycle of arbitrary length such that there are no repeated
strings', i.e.\ an $n$-window sequence of a particular period that derives from the specifics of
the application.

A number of authors have considered the problem of constructing $n$-window sequences with
particular properties. Jackson et al.\ \cite[Problem 477]{Jackson09} observed that a binary
$n$-window sequence exists for every possible period $m$ satisfying $n\leq m\leq2^n$. Ruskey et
al.\ \cite{Ruskey12} showed that a binary $n$-window sequence can be constructed containing all
$n$-tuples with weights in the range $[a,b]$ as long as $0\leq a<b\leq n$, and such sequences were
further studied by Li et al.\ \cite{Li23}.

Sequences containing only $n$-tuples with specified weights are a particular example of what are
sometimes known as `de Bruijn sequences with forbidden subsequences', as for example examined in
the binary case by Penne \cite{Penne10} and for arbitrary alphabets by Alhakim \cite{Alhakim22}.
Tan and Shallot \cite{Tan13}, studied the more general problem of the existence of sequences
containing only the members of a pre-specified set of $n$-tuples, where they allow such $n$-tuples
to occur multiple times in a period. These are all examples of \emph{Universal cycles}
\cite{Chung92}, i.e.\ periodic sequences in which all elements of a particular set of $n$-tuples
occur precisely once in a period.

The sequences we consider in this paper are closely related yet distinct from those previously
examined. That is, we consider the case where the $n$-tuples are divided into pairs (an $n$-tuple
and its negative) and we only permit at most one of each pair to occur in a sequence.  Such a
sequence might have application in a position-location case where it is difficult for a reader to
distinguish between an $n$-tuple and its negative.  These sequences are closely related to
\emph{orientable} \cite{Dai93,Mitchell26} and \emph{negative orientable} sequences
\cite{Mitchell25a}, which are also periodic $n$-window sequences in which the $n$-tuples are
divided into pairs and at most one of each pair is permitted to occur in a sequence; the only
difference is how the `pairing' function is defined.

\section{The de Bruijn graph and the Lempel homomorphism}  \label{section:deBruijn}

Before proceeding we need to introduce certain key concepts which we use in the remainder of the
paper.

\subsection{The de Bruijn graph}

Following Alhakim et al.\ \cite{Alhakim24a}, for positive integers $n$ and $k$ greater than one,
let $\mathbb{Z}_k^n$ be the set of all $k^n$ tuples of length $n$ with entries from the group
$\mathbb{Z}_k$ of residues modulo $k$.

\begin{definition}
The order $n$ de Bruijn digraph, $B_k(n)$, is a directed graph with $\mathbb{Z}^n_k$ as its vertex
set in which, for any two vertices $\textbf{x} = (x_0,x_1,\ldots,x_{n-1})$ and $\textbf{y} =
(y_0,y_1,\ldots,y_{n-1})$, the pair $(\textbf{x},\textbf{y})$ is an edge if and only if $y_i =
x_{i+1}$ for every $i$ ($0\leq i< n-1$). We label such an edge with the $(n+1)$-tuple
$(x_0,x_1,\ldots,x_{n-1},y_{n-1})$.
\end{definition}

Note that we have defined two ways of specifying an edge in $B_k(n)$, namely as either a pair of
vertices $(\mathbf{x},\mathbf{y})$, where $\mathbf{x},\mathbf{y}$ are $k$-ary $n$-tuples, or as a
single $k$-ary $(n+1)$-tuple $\mathbf{z}$.

\begin{definition}  \label{definition;Eulerian} An \emph{Eulerian digraph} is a
connected digraph for which every vertex has in-degree equal to out-degree --- this latter property
characterises a \emph{balanced} digraph.
\end{definition}

The name derives from the fact that there exists an Eulerian circuit, i.e.\ a path visiting every
edge once, in a digraph if and only if the digraph is Eulerian --- see, for example, Corollary 6.1
of Gibbons \cite{Gibbons85} or Knuth \cite[Section 2.3.4.2, Theorem G]{Knuth97}. Moreover, there
are simple and efficient algorithms for finding Eulerian circuits --- see for example \cite[Figure
6.5]{Gibbons85} or \cite[Section 2.3.4.2, Theorem D]{Knuth97}.

\subsection{Antinegative subgraphs}

It is straightforward to verify that there is a correspondence between a $k$-ary de Bruijn sequence
of order $n$ and a directed Eulerian circuit in $B_{k}({n-1})$, in which consecutive edges in the
circuit correspond to consecutive $n$-tuples occurring in the sequence. Inspired by this
correspondence, and following \cite{Mitchell26}, we next describe a correspondence between negative
avoiding sequences of span $n$ and Eulerian circuits in subgraphs of $B_{k}(n-1)$, where these
subgraphs satisfy certain special properties.  If we can then construct subgraphs with these
special properties which admit Eulerian circuits then we have a simple method of generating
negative avoiding sequences. Note that the graphs we are concerned with here are an example of
Rauzy Graphs \cite{Rauzy82}.

\begin{definition}  \label{definition:antinegative}
Suppose $T$ is a subgraph of the de Bruijn digraph $B_{k}(n)$ for some $n\geq2$ and $k\geq2$.  $T$
is said to be \emph{antinegative} if, given vertices
$\mathbf{x}=(x_0,x_1,\ldots,x_{n-1}),\mathbf{y}=(y_0,y_1,\ldots,y_{n-1})\in \mathbb{Z}_k^n$ where
$(\mathbf{x},\mathbf{y})$ is an edge in $T$, then $(-\mathbf{x},-\mathbf{y})$ is \emph{not} an edge
in $T$.
\end{definition}

\begin{definition}[Definition 3.2 of \cite{Mitchell26}]  \label{definition:En(s)}
Suppose $S$ is a periodic $k$-ary $n$-window sequence of period $m$ for some $n\geq2$, $k\geq2$ and
$m\geq1$. Then the \emph{edge-graph} $E_n(S)$ of $S$ is defined to be the subgraph of $B_{k}(n-1)$
whose directed edges correspond to $n$-tuples occurring in a period of $S$, i.e.\ with edge set
\[ \{ \mathbf{s}_n(i)~:~0\leq i\leq m-1  \} \]
and whose vertices are those vertices of $B_{k}(n-1)$ that have in-degree at least one.
\end{definition}

We can now state a key lemma.

\begin{lemma}  \label{lemma:NAS_iff_antinegative}
Suppose $S$ is a $k$-ary periodic $n$-window sequence.  Then $S$ is an $\mathcal{NAS}_k(n)$ if and
only if $E_n(S)$ is antinegative in $B_{k}(n-1)$.
\end{lemma}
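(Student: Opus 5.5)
The plan is to unwind the definitions and use the fact that edges of $B_k(n-1)$ are in bijection with $k$-ary $n$-tuples, in a way that commutes with negation. An edge $(\mathbf{x},\mathbf{y})$ of $B_k(n-1)$, with $\mathbf{x}=(x_0,\ldots,x_{n-2})$ and $\mathbf{y}=(x_1,\ldots,x_{n-2},y_{n-2})$, carries the label $\mathbf{z}=(x_0,\ldots,x_{n-2},y_{n-2})$. Conversely, every $n$-tuple $\mathbf{z}=(z_0,\ldots,z_{n-1})$ labels exactly one edge, from $(z_0,\ldots,z_{n-2})$ to $(z_1,\ldots,z_{n-1})$.

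The first step is to observe that $(-\mathbf{x},-\mathbf{y})$ is again an edge of $B_k(n-1)$. This holds because the condition $y_i=x_{i+1}$ is preserved by negating every coordinate. Its label is then exactly $-\mathbf{z}$. So negation of edges, viewed as vertex pairs, corresponds precisely to negation of $n$-tuples, viewed as labels.

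Next, by the definition of $E_n(S)$, its edge set is exactly the set of $n$-tuples $\mathbf{s}_n(i)$ occurring in $S$. Hence for any $n$-tuple $\mathbf{z}$, the edge with label $\mathbf{z}$ lies in $E_n(S)$ if and only if $\mathbf{z}$ occurs in $S$. We also need that whenever an edge lies in $E_n(S)$, its endpoints are vertices of $E_n(S)$. This holds because a periodic sequence in which $\mathbf{z}$ occurs also contains occurrences ending at both the head and the tail $(n-1)$-tuples, so both have in-degree at least one. Consequently, "$(-\mathbf{x},-\mathbf{y})$ is an edge of $E_n(S)$" is equivalent to "$-\mathbf{z}$ occurs in $S$".

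Finally, combine the two directions. $E_n(S)$ is antinegative exactly when, for every edge $(\mathbf{x},\mathbf{y})$ in $E_n(S)$, the pair $(-\mathbf{x},-\mathbf{y})$ is not an edge of $E_n(S)$. By the correspondence above, this says that for every $n$-tuple $\mathbf{z}$ occurring in $S$, the tuple $-\mathbf{z}$ does not occur in $S$. Since $S$ is assumed to be an $n$-window sequence, this is exactly the defining condition for $S$ to be an $\mathcal{NAS}_k(n)$.

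Self-negative tuples need no separate treatment. If $\mathbf{z}=-\mathbf{z}$ occurs in $S$, then its edge is its own negative, and both conditions fail simultaneously. There is no real obstacle; the only point needing care is the label/negation compatibility in the first step.
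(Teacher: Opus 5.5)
Your proposal is correct and follows essentially the same route as the paper: both identify the edges of $E_n(S)$ with the $n$-tuples occurring in $S$, so that antinegativity becomes ``no occurring $n$-tuple has its negative occurring,'' which is the $\mathcal{NAS}_k(n)$ condition. Your explicit checks, that negating the vertex pair $(\mathbf{x},\mathbf{y})$ negates the edge label and that both endpoints of each edge are vertices of $E_n(S)$, fill in details the paper leaves implicit.
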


\begin{proof}
Suppose $S$ is an $\mathcal{NAS}_k(n)$.  Then, by definition, $\mathbf{s}_n(i) \neq
-\mathbf{s}_n(j)$, for any $i,j$.  Hence, again by definition, this means that $\mathbf{a}\neq
-\mathbf{b}$ for any edges $\mathbf{a},\mathbf{b}$ in $E_n(S)$.  Hence $E_n(S)$ is antinegative.

Now suppose $E_n(S)$ is antinegative, and hence $\mathbf{a}\neq -\mathbf{b}$ for any edges
$\mathbf{a},\mathbf{b}$ in $E_n(S)$.  Thus, $\mathbf{s}_n(i) \neq -\mathbf{s}_n(j)$, for any $i,j$,
and so $S$ is an $\mathcal{NAS}_k(n)$.
\end{proof}

This enables us to give our main result.

\begin{theorem}  \label{theorem:correspondence}
If $S$ is an $\mathcal{NAS}_k(n)$ of period $m$ then $E_n(S)$ is an antinegative Eulerian subgraph
of $B_{k}(n-1)$ containing $m$ edges.  Moreover, if $T$ is an antinegative Eulerian subgraph of
$B_{k}(n-1)$ with $m$ edges, then there exists an $\mathcal{NAS}_k(n)$ $S$ of period $m$ such that
$E_n(S)=T$.
\end{theorem}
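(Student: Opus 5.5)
The proof has two directions, and in both Lemma~\ref{lemma:NAS_iff_antinegative} handles the antinegative property. The remaining work is the standard correspondence between periodic $n$-window sequences and circuits in $B_k(n-1)$ that traverse each edge at most once.

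For the first direction, let $S$ be an $\mathcal{NAS}_k(n)$ of period $m$. Since $S$ is an $n$-window sequence, the $n$-tuples $\mathbf{s}_n(0),\ldots,\mathbf{s}_n(m-1)$ are pairwise distinct, so $E_n(S)$ has exactly $m$ edges. Each $\mathbf{s}_n(i)$ is the edge from $\mathbf{s}_{n-1}(i)$ to $\mathbf{s}_{n-1}(i+1)$. Hence the sequence of edges $\mathbf{s}_n(0),\mathbf{s}_n(1),\ldots,\mathbf{s}_n(m-1)$ is a closed walk in $B_k(n-1)$, because $\mathbf{s}_n(m-1)$ ends at $\mathbf{s}_{n-1}(m)=\mathbf{s}_{n-1}(0)$. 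This walk uses every edge of $E_n(S)$ exactly once.

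From this I will read off the three properties needed.
\begin{itemize}
\item \emph{Balanced:} each passage of the walk through a vertex uses one incoming and one outgoing edge, so every vertex has in-degree equal to out-degree. Every vertex with positive in-degree therefore also has positive out-degree, so the vertex set in Definition~\ref{definition:En(s)} is exactly the set of vertices the walk visits.
\item \emph{Connected:} any two vertices lie on the closed walk.
\item \emph{Antinegative:} this is immediate from Lemma~\ref{lemma:NAS_iff_antinegative}.
\end{itemize}

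For the converse, let $T$ be an antinegative Eulerian subgraph of $B_k(n-1)$ with $m$ edges. By the cited characterisation of Eulerian digraphs, $T$ has an Eulerian circuit $e_0,e_1,\ldots,e_{m-1}$. Define $s_i$ to be the last coordinate of $e_{i \bmod m}$, which gives a sequence of period dividing $m$.

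The main check is that $\mathbf{s}_n(i)=e_i$ for every $i$. Consecutive edges satisfy: the head of $e_{i-1}$ is the tail of $e_i$. So, by downward induction along the circuit, the first $n-1$ coordinates of $e_i$ are the last coordinates of $e_{i-n+1},\ldots,e_{i-1}$. This is where periodicity matters, since the circuit may be shorter than $n-1$; I expect this indexing, and the case of small $m$, to be the only delicate point. I would handle it by working throughout with indices mod $m$ on the infinite periodic walk, so that the induction never runs out of earlier edges.

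Given $\mathbf{s}_n(i)=e_i$, distinctness of the edges of the circuit shows that $\mathbf{s}_n(i)=\mathbf{s}_n(j)$ forces $i\equiv j \pmod m$. So $S$ is an $n$-window sequence, and its least period is exactly $m$, since a shorter period $p$ would give $\mathbf{s}_n(0)=\mathbf{s}_n(p)$ with $0<p<m$. Its edge set is that of $T$. Its vertex set also agrees with that of $T$: every vertex of $T$ lies on the circuit, so it is the head of some edge and hence has in-degree at least one. Thus $E_n(S)=T$. Finally, since $E_n(S)=T$ is antinegative, Lemma~\ref{lemma:NAS_iff_antinegative} shows that $S$ is an $\mathcal{NAS}_k(n)$.
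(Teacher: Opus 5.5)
Your proposal is correct and follows essentially the same route as the paper: Lemma~\ref{lemma:NAS_iff_antinegative} for antinegativity, the closed walk $\mathbf{s}_n(0),\dots,\mathbf{s}_n(m-1)$ for balance and connectivity, and reading an Eulerian circuit off as a sequence for the converse, with more detail than the paper gives. One small indexing slip: with $s_i$ defined as the \emph{last} coordinate of $e_i$, your own induction gives $\mathbf{s}_n(i)=e_{i+n-1}$ (indices mod $m$), not $e_i$. This is harmless, since the shift is a bijection on indices mod $m$; alternatively, define $s_i$ as the \emph{first} coordinate of $e_i$ to get $\mathbf{s}_n(i)=e_i$ exactly.
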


\begin{proof}
Suppose $S=(s_i)$ is an $\mathcal{NAS}_k(n)$ of period $m$.  Then, by
Lemma~\ref{lemma:NAS_iff_antinegative}, $E_n(S)$ is antinegative in $B_{k}(n-1)$.  Also, trivially,
$E_n(S)$ contains $m$ edges.  Finally, if $(x_1,x_2,\ldots,x_{n-1})$ is a vertex in $E_n(S)$, then
the incoming edges all have distinct labels of the form $(x,x_1,x_2,\ldots,x_{n-1})$ for some $x$,
and correspond to $\mathbf{s}_n(i)$ for some $i$.  Then the edges corresponding to
$\mathbf{s}_n(i+1)$ will have distinct labels of the form $(x_1,x_2,\ldots,x_{n-1},y)$ for some
$y$, and for every incoming edge there is an outgoing edge, and vice versa.  Hence the in-degree of
every vertex is the same as the out-degree, i.e.\ $E_n(S)$ is balanced.  Finally, note that
$E_n(S)$ is connected since we only include vertices in $E_n(S)$ with in-degree greater than zero,
and the sequence $S$ defines a path incorporating every vertex in $E_n(S)$.

Now suppose $T$ is an antinegative Eulerian subgraph of $B_{k}(n-1)$ with $m$ edges.  Then there
exists an Eulerian circuit of length $m$ in $T$ corresponding to a sequence $S$ with period $m$ in
the natural way.  $S$ is clearly an $n$-window sequence since the circuit visits every edge exactly
once, and each edge corresponds to a unique $n$-tuple.  It is straightforward to verify that
$E_n(S)=T$. Finally, $S$ is an $\mathcal{NAS}_k(n)$ from Lemma~\ref{lemma:NAS_iff_antinegative}.
\end{proof}

\begin{remark}
In general, for any antinegative Eulerian subgraph of $B_{k}(n-1)$ $T$, there will exist many
negative avoiding sequences with edge-graph $T$, since there may be many different Eulerian
circuits in $T$, each corresponding to a different sequence.
\end{remark}

\subsection{Implications}

Theorem~\ref{theorem:correspondence} means that if we can construct an antinegative Eulerian
subgraph of $B_{k}(n-1)$ with $m$ edges for `large' $m$ (i.e.\ with $m$ close to the maximum
possible), then we will immediately have a set of negative avoiding sequences with period close to
the maximum.  As a result, in the remainder of this paper we consider ways of constructing sets of
edges in $B_{k}(n-1)$ that define an antinegative Eulerian subgraph of $B_{k}(n-1)$.

\subsection{The Lempel Homomorphism}

We next define the Lempel homomorphism $D$. Following \cite{Mitchell26} we give a simplified
version of the definition of Alhakim and Akinwande \cite{Alhakim11}, who generalised the original
Lempel definition \cite{Lempel70}.

\begin{definition} \label{Lempel}
Let $D:\mathbb{Z}_k^n\rightarrow \mathbb{Z}_k^{n-1}$ be defined such that
\[ D(\mathbf{a}) = (a_1-a_0,a_2-a_1,\ldots, a_{n-1}-a_{n-2}) \]
where $\mathbf{a} = (a_0,a_1,\ldots,a_{n-1})\in \mathbb{Z}_k^n$. Clearly $D$ is onto; thus we also
define $D^{-1}$ to map an element of $\mathbb{Z}_k^{n-1}$ to the set of its pre-images under $D$.
\end{definition}

We have the following.

\begin{lemma}[Alhakim \& Akinwande, \cite{Alhakim11}]\label{lempel inverse size}
$|D^{-1}(\mathbf{a})|=k$ for every $\mathbf{a}\in \mathbb{Z}_k^{n-1}$.
\end{lemma}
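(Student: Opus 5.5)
To prove Lemma~\ref{lempel inverse size}, I will fix $\mathbf{b}=(b_0,b_1,\ldots,b_{n-2})\in\mathbb{Z}_k^{n-1}$ and describe $D^{-1}(\mathbf{b})$ explicitly. The defining condition $D(\mathbf{a})=\mathbf{b}$ is the system of $n-1$ equations $a_{i+1}-a_i=b_i$ for $0\leq i\leq n-2$, working in $\mathbb{Z}_k$. This system is triangular: once $a_0$ is chosen, each equation determines the next coordinate from the previous one.

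I will show that the map $D^{-1}(\mathbf{b})\rightarrow\mathbb{Z}_k$ sending $\mathbf{a}\mapsto a_0$ is a bijection. For injectivity, induction on $i$ gives
\[ a_i = a_0+\sum_{j=0}^{i-1} b_j \]
for every preimage $\mathbf{a}$, so the whole tuple is fixed by $a_0$. For surjectivity, given any $c\in\mathbb{Z}_k$, I define $a_0=c$ and $a_i=c+\sum_{j<i}b_j$. Substituting shows $a_{i+1}-a_i=b_i$ for each $i$, so this $\mathbf{a}$ lies in $D^{-1}(\mathbf{b})$ and has $a_0=c$. Hence $|D^{-1}(\mathbf{b})|=|\mathbb{Z}_k|=k$.

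An alternative route is algebraic. $D$ is a group homomorphism $\mathbb{Z}_k^n\rightarrow\mathbb{Z}_k^{n-1}$, it is onto, and its kernel is the set of $k$ constant tuples. Every fibre is then a coset of the kernel and so has size $k^n/k^{n-1}=k$. Neither route has a real obstacle; the only point needing care is that the partial sums are taken modulo $k$, which causes no difficulty because $\mathbb{Z}_k$ is a group under addition.
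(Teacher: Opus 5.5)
Your proof is correct and complete. Each fibre $D^{-1}(\mathbf{b})$ is parametrised bijectively by $a_0\in\mathbb{Z}_k$ via $a_i=a_0+\sum_{j<i}b_j$. Your alternative coset argument, using $D$ onto with kernel equal to the $k$ constant tuples, is equally valid.

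The paper gives no proof of this lemma and simply cites Alhakim and Akinwande, so there is no in-paper argument to compare against. Your partial-sum parametrisation is, however, exactly what the paper uses implicitly in Section~\ref{section:maximal_neven}. There it writes the $k$ preimage circuits of the self-negative circuit as $\mathbf{b}_x=[x+b_0,x+b_1,\dots]$ with $b_i=\sum_{j\le i}a_j$.
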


Since the edges of $B_k(n-1)$ are associated with $k$-ary $n$-tuples, $D$ induces a mapping, which
we also write as $D$, from $B_k(n)$ to $B_{k}(n-1)$. It is simple to show that $D$ is a graph
homomorphism. In this context we also write $D^{-1}$ for the pre-image mapping which maps an edge
of $B_{k}(n-1)$ to a set of $k$ edges of $B_{k}(n)$ and each pre-image edge contains corresponding
pre-image vertices.

The following simple result shows the value of $D$ in constructing negative avoiding sequences.

\begin{lemma}  \label{lemma:Lempel_antisymmetry}
Suppose $n\geq2$ and $k\geq3$.  If $T$ is an antinegative subgraph of the de Bruijn digraph
$B_{k}(n-1)$ with edge set $E$, then $D^{-1}(E)$, of cardinality $k|E|$, is the set of edges for an
antinegative subgraph $D^{-1}(T)$ of $B_{k}(n)$. Moreover, the in-degree (out-degree) of any vertex
$\mathbf{x}$ in $D^{-1}(T)$ equals the in-degree (out-degree) of $D(\mathbf{x})$ in $T$; thus, in
particular, if $T$ is balanced then so is $D^{-1}(T)$.
\end{lemma}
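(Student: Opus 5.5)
The proof splits into three parts: the cardinality, the antinegative property, and the degree statement. Throughout, I treat an edge of $B_k(n)$ as an $(n+1)$-tuple and an edge of $B_k(n-1)$ as an $n$-tuple. The map $D$ is applied to such tuples as in Definition~\ref{Lempel}.

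\textbf{Cardinality.} Because $D$ is a function, the fibres $D^{-1}(\mathbf{e})$ for distinct $\mathbf{e}\in E$ are pairwise disjoint. By Lemma~\ref{lempel inverse size} each fibre has exactly $k$ elements, so $|D^{-1}(E)|=k|E|$.

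\textbf{Antinegativity.} The key observation is that $D$ is additive, so $D(-\mathbf{z})=-D(\mathbf{z})$ for every tuple $\mathbf{z}$. Suppose both $\mathbf{z}$ and $-\mathbf{z}$ lie in $D^{-1}(E)$. Then $D(\mathbf{z})$ and $-D(\mathbf{z})$ are both edges of $T$, and these form a pair of the form $(\mathbf{x},\mathbf{y})$, $(-\mathbf{x},-\mathbf{y})$. This contradicts the antinegativity of $T$ (Definition~\ref{definition:antinegative}).

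One subtlety needs checking. Definition~\ref{definition:antinegative} read literally also forbids self-negative edges: taking the edge itself as the pair gives $D(\mathbf{z})=-D(\mathbf{z})$, so $D(\mathbf{z})$ would be a self-negative edge of $T$, which is not allowed. This closes the one case the pairing argument does not visibly cover. I expect this case to be the only place requiring care, and it is immediate once the definition is read this way.

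\textbf{Degrees.} Let $\mathbf{x}=(x_0,\dots,x_{n-1})$ be a vertex of $D^{-1}(T)$. Its out-edges in $B_k(n)$ are the tuples $(x_0,\dots,x_{n-1},c)$ for $c\in\mathbb{Z}_k$. Such an edge maps under $D$ to
\[(D(\mathbf{x}),\, c-x_{n-1}),\]
i.e.\ the $n$-tuple obtained from $D(\mathbf{x})$ by appending $c-x_{n-1}$. As $c$ ranges over $\mathbb{Z}_k$, the quantity $c-x_{n-1}$ ranges bijectively over $\mathbb{Z}_k$. Hence the out-edges of $\mathbf{x}$ correspond bijectively to the out-edges of $D(\mathbf{x})$ in $B_k(n-1)$. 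An out-edge of $\mathbf{x}$ lies in $D^{-1}(E)$ exactly when its image lies in $E$, so the out-degree of $\mathbf{x}$ in $D^{-1}(T)$ equals the out-degree of $D(\mathbf{x})$ in $T$.

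The in-degree case is symmetric. In-edges have the form $(c,x_0,\dots,x_{n-1})$, and they map to the $n$-tuple obtained by prepending $x_0-c$ to $D(\mathbf{x})$.

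Balance of $D^{-1}(T)$ then follows immediately from balance of $T$.
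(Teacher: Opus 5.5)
Your proof is correct and follows essentially the same route as the paper: antinegativity via $D(-\mathbf{z})=-D(\mathbf{z})$ contradicting the antinegativity of $T$, the count $k|E|$ from Lemma~\ref{lempel inverse size}, and degree equality via the correspondence between edges at $\mathbf{x}$ and edges at $D(\mathbf{x})$. You are somewhat more explicit than the paper, spelling out disjointness of fibres and the bijections $c\mapsto c-x_{n-1}$ and $c\mapsto x_0-c$. Your separate handling of the self-negative case is harmless but not actually needed, since the paper's argument never assumes $\mathbf{a}\neq\mathbf{b}$ and so already covers it.
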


\begin{proof}
Suppose $D^{-1}(T)$ is not antinegative, i.e.\ there exist $(n+1)$-tuples
$\mathbf{a}=(a_0,a_1,\ldots,a_{n}),\mathbf{b}=(b_0,b_1,\ldots,b_{n})\in D^{-1}(E)$ such that
$\mathbf{a}=-\mathbf{b}$, i.e.\ $a_i=-b_i$, for $0\leq i\leq n$.

Suppose also that $D(\mathbf{a})=\mathbf{c}$ and $D(\mathbf{b})=\mathbf{d}$, where
$\mathbf{c}=(c_0,c_1,\ldots,c_{n-1}),\mathbf{d}=(d_0,d_1,\ldots,d_{n-1})\in E$. Hence
$c_i=a_{i+1}-a_i$ and $d_i=b_{i+1}-b_i$ for $0\leq i\leq n-1$.  Since $a_i=-b_i$ for $0\leq i\leq
n$, for any $j$ ($0\leq j\leq n-1$) we have:
\[ c_j = a_{j+1}-a_j = -b_{j+1}+b_{j} = -d_j. \]
Hence $\mathbf{c}=-\mathbf{d}$, but this contradicts the assumption that $T$ is antinegative.

Every edge in $E$ corresponds to $k$ edges in $D^{-1}(E)$, and hence $|D^{-1}(E)|=k|E|$.

It remains to show that every vertex of $D^{-1}(T)$ has in-degree equal to its out-degree. Suppose
$\mathbf{x}=(x_1,x_2,\ldots,x_{n})$ is a vertex of $D^{-1}(T)$.  For every edge
$\mathbf{a}=(a,x_1,x_2,\ldots,x_{n})\in D^{-1}(T)$ that ends in $\mathbf{x}$, there is a
corresponding edge $D(\mathbf{a})=(x_1-a,x_2-x_1,\ldots,x_{n}-x_{n-1})\in T$ that ends in
$D(\mathbf{x})$.  That is, the in-degree of $\mathbf{x}$ in $D^{-1}(T)$ will equal the in-degree of
$D(\mathbf{x})$ in $T$.  An exactly similar result holds for out-degree.  Since every vertex of $T$
has in-degree equal to its out-degree, the same holds for $D^{-1}(T)$.
\end{proof}

\section{A simple bound and some examples}    \label{section:simple_bound}

Having established our key underlying ideas, we give the following elementary bound.

\begin{theorem}  \label{theorem:NAS_bound}
Suppose $k\geq3$ and $n\geq 2$.  If $S$ is a $\mathcal{NAS}_k(n)$ of period $m$ then
\[ m\leq\begin{cases}
\frac{k^n-1}{2}& \text{if $k$ is odd}\\
\frac{k^n-2^n}{2}& \text{if $k$ is even}
\end{cases}\]
\end{theorem}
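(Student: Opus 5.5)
The plan is a direct counting argument on the set $\mathbb{Z}_k^n$ of all $k$-ary $n$-tuples. A period of an $\mathcal{NAS}_k(n)$ contains $m$ distinct $n$-tuples, since $S$ is an $n$-window sequence. We bound $m$ by the number of $n$-tuples that can possibly be included.

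First, consider the involution $\mathbf{a}\mapsto-\mathbf{a}$ on $\mathbb{Z}_k^n$. Its orbits are the fixed points (the self-negative $n$-tuples) and the pairs $\{\mathbf{a},-\mathbf{a}\}$ with $\mathbf{a}\neq-\mathbf{a}$. As already observed, $S$ cannot contain a self-negative $n$-tuple, since then that tuple and its negative would both occur. From each two-element orbit, at most one member can occur in $S$. Hence, writing $F$ for the number of self-negative $n$-tuples,
\[ m\leq \frac{k^n-F}{2}. \]

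Second, count $F$. The condition $\mathbf{a}=-\mathbf{a}$ means $2a_i\equiv 0\pmod k$ for every $i$, and the coordinates are independent.
\begin{itemize}
\item If $k$ is odd, $2$ is invertible modulo $k$, so the only solution is $a_i=0$. Then $F=1$, giving $m\leq (k^n-1)/2$.
\item If $k$ is even, the solutions are $a_i\in\{0,k/2\}$. Then $F=2^n$, giving $m\leq (k^n-2^n)/2$.
\end{itemize}

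There is no real obstacle here. The only step needing care is justifying that the orbits of negation partition $\mathbb{Z}_k^n$ into fixed points and two-element pairs, which holds because negation is an involution.
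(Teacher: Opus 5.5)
Your proposal is correct and follows the paper's own argument: self-negative $n$-tuples are excluded, at most one tuple from each pair $\{\mathbf{a},-\mathbf{a}\}$ can occur, and there are $1$ or $2^n$ self-negative tuples according as $k$ is odd or even. You also spell out two steps the paper leaves implicit: that the $m$ tuples in a period are distinct, and that $2a_i\equiv 0 \pmod k$ forces $a_i\in\{0,k/2\}$ when $k$ is even.
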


\begin{proof}
Clearly, if an $n$-tuple $\mathbf{a}$  is self-negative, i.e.\ it
satisfies $\mathbf{a}=-\mathbf{a}$, then it cannot occur in a $\mathcal{NAS}_k(n)$.  There is one
such $n$-tuple if $k$ is odd, namely $(0,0,\dots,0)$, and $2^n$ if $k$ is even, namely all
$n$-tuples with entries equal to $0$ or $k/2$.
\end{proof}

\begin{definition} An $\mathcal{NAS}_k(n)$ ($n\geq2$, $k\geq3$) is said to be \emph{maximal} if
it has period meeting the bound of Theorem~\ref{theorem:NAS_bound}.
\end{definition}

In the remainder of this paper we are concerned with determining the existence of maximal negative
orientable sequences for all possible $n\geq2$ and $k\geq3$. We give three examples showing that
such sequences exist for small values of $n$ and $k$.

\begin{example}  \label{example:small_examples}
If $k=3$ and $n=2$, then $[0,1,1,2]$ is a maximal $\mathcal{NAS}_3(2)$, since it has period
$(3^2-1)/2=4$.

If $k=4$ and $n=2$, then $[1,3,0,1,2,1]$ is a maximal $\mathcal{NAS}_4(2)$, since it has period
$(4^2-2^2)/2=6$.

If $k=3$ and $n=3$ then $[0,1,0,0,1,1,1,0,1,2,1,1,2]$ is a maximal $\mathcal{NAS}_3(3)$, since it
has period $(3^3-1)/2=13$.
\end{example}

 Before proceeding observe that it, if $k\geq3$ and $n\geq2$, it follows immediately
from the definition that if $[a_0,a_1,\dots,a_{m-1}]$ is a $\mathcal{NAS}_k(n)$, then so is
$[-a_0,-a_1,\dots,-a_{m-1}]$. Hence if $[a_0,a_1,\dots,a_{m-1}]$ is a maximal $\mathcal{NAS}_k(n)$,
then $[a_0,a_1,\dots,a_{m-1}]$ and $[-a_0,-a_1,\dots,-a_{m-1}]$ contain all the non-self-negative
$m$-tuples.  The self-negative tuples can be arranged to form a sequence of period 1 or $2^n$,
depending on whether $k$ is odd or even. These three sequences can be merged to generate a de
Bruijn sequence.

\section{Maximal sequences for odd \texorpdfstring{$n$}{n}}    \label{section:maximal_oddn}

We next establish the existence of a maximal $\mathcal{NAS}_k(n)$ for any odd $n\geq3$ and any
$k\geq 3$.

\subsection{Preliminaries}

We first need the following concept.

\begin{definition}[\cite{Mitchell25a}]  \label{definition:pseudoweight}
Suppose $\mathbf{u}=(u_0,u_1,\dots,u_{n-1})$ is an $n$-tuple of elements of $\mathbb{Z}_k$
($n\geq1$, $k\geq3$). Define the function $f:\mathbb{Z}_k\rightarrow\mathbb{Q}$ as follows: for any
$u\in\mathbb{Z}_k$ treat $u$ as an integer in the range $[0,k-1]$ and set $f(u)=u$ if $u\neq0$ and
$f(u)=q/2$ if $u=0$. Then the \emph{pseudoweight} of $\mathbf{u}$ is defined to be the sum
\[ w^*(\mathbf{u}) = \sum_{i=0}^{n-1}f(u_i) \]
where the sum is computed in $\mathbb{Q}$.
\end{definition}

As a simple example for $k=3$, the 4-tuple $(0,1,1,2)$ has weight $0+1+1+2=4$ and pseudoweight
$1.5+1+1+2=5.5$, since $f(0)=\frac{3}{2}$.

\begin{definition}[\cite{Mitchell26}]
Suppose $n\geq2$ and $k\geq3$. Let $E_k(n)$ be the set of all $k$-ary $n$-tuples with pseudoweight
less than $nk/2$.
\end{definition}

We can now give a key result.

\begin{theorem}[\cite{Mitchell26}]  \label{theorem:pseudoweight}
Suppose $n\geq2$ and $k\geq3$. $E_k(n)$ is the set of edges for an Eulerian subgraph $U_k(n-1)$, of
the de Bruijn digraph $B_{k}(n-1)$.
\end{theorem}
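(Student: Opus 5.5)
The plan is to prove the two properties that make a subgraph Eulerian in the sense of Definition~\ref{definition;Eulerian}: that the subgraph $U_k(n-1)$ with edge set $E_k(n)$ is \emph{balanced}, and that it is \emph{connected}. As in Definition~\ref{definition:En(s)}, the vertices are those vertices of $B_k(n-1)$ with in-degree at least one. (I read the $q/2$ in Definition~\ref{definition:pseudoweight} as $k/2$, so $f(0)=k/2$.) Two facts about $f$ will be used throughout. First, the pseudoweight is additive over concatenation. Second, $f(1)=1\le f(u)$ for every $u\in\mathbb{Z}_k$, because $f(u)=u\ge1$ for $u\neq0$ and $f(0)=k/2\ge 3/2$.

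\emph{Balance.} Fix a vertex $\mathbf{x}=(x_1,\dots,x_{n-1})$ and write $w=w^*(\mathbf{x})$. By additivity, an incoming edge $(a,x_1,\dots,x_{n-1})$ lies in $E_k(n)$ if and only if $f(a)<nk/2-w$. Likewise, an outgoing edge $(x_1,\dots,x_{n-1},b)$ lies in $E_k(n)$ if and only if $f(b)<nk/2-w$. Both counts therefore equal $|\{u\in\mathbb{Z}_k : f(u)<nk/2-w\}|$, so in-degree equals out-degree at every vertex.

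\emph{Connectivity.} I will show that every vertex of $U_k(n-1)$ has a directed path to the fixed vertex $\mathbf{1}=(1,\dots,1)$. Note that $\mathbf{1}$ itself is a vertex, since the loop $(1,\dots,1)$ has pseudoweight $n<nk/2$. Let $\mathbf{x}$ be any vertex. It has in-degree at least one, so by balance it has an outgoing edge $(\mathbf{x},b)$ of pseudoweight less than $nk/2$. Since $f(1)\le f(b)$, the edge $(x_1,\dots,x_{n-1},1)$ also has pseudoweight less than $nk/2$ and so lies in $E_k(n)$.

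Now keep appending $1$'s. Each further step drops the leftmost entry $u$ of the current window and appends a $1$. The pseudoweight changes by $1-f(u)\le0$, so it never increases, and every window along the way stays in $E_k(n)$. After $n-1$ appended $1$'s the walk arrives at $\mathbf{1}$. Hence the underlying undirected graph is connected; in fact every vertex reaches $\mathbf{1}$ by a directed path.

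Finally, a balanced digraph that is weakly connected is strongly connected, which is the standard fact underlying the Euler circuit theorem cited from \cite{Gibbons85,Knuth97}. So $U_k(n-1)$ is Eulerian. The only step needing any care is connectivity, specifically producing the first edge out of $\mathbf{x}$. Balance handles this: it guarantees an out-edge exists, and the monotonicity $f(1)\le f(u)$ then keeps the whole path to $\mathbf{1}$ inside $E_k(n)$.
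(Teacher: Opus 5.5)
Your proof is correct and complete. The paper gives no proof of this statement; it is quoted from \cite{Mitchell26}, so there is no in-text argument to compare against, and your reading of $q/2$ as $k/2$ is the intended one. Your two steps supply a full self-contained justification: balance holds because $(a,x_1,\dots,x_{n-1})$ and $(x_1,\dots,x_{n-1},a)$ have equal pseudoweight, and connectivity follows from a directed walk to $(1,\dots,1)$, using that $1$ minimises $f$ and that $k\geq3$ puts the all-ones loop below the threshold $nk/2$.
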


We also have the following simple result, which follows immediately given that, for any $k$-ary
$n$-tuple $\mathbf{a}$, $w^*(\mathbf{a})=nk-w^*(-\mathbf{a})$.

\begin{lemma}  \label{lemma:Ukn_antinegative}
Suppose $n\geq2$ and $k\geq3$. $U_k(n-1)$
is  an antinegative subgraph of the de Bruijn digraph $B_{k}(n-1)$.
\end{lemma}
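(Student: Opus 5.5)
The plan is to derive antinegativity directly from the pseudoweight identity $w^*(\mathbf{a})=nk-w^*(-\mathbf{a})$ together with the definition of $E_k(n)$. (I read the ``$q/2$'' in the definition of $f$ as $k/2$.) Recall from Theorem~\ref{theorem:pseudoweight} that the edges of $U_k(n-1)$ are exactly the $n$-tuples in $E_k(n)$. Also recall that an edge $(\mathbf{x},\mathbf{y})$ of $B_k(n-1)$ corresponds to a unique $n$-tuple $\mathbf{a}$. The edge $(-\mathbf{x},-\mathbf{y})$ then corresponds to $-\mathbf{a}$: negation commutes with the overlapping of coordinates, so $-\mathbf{y}$ is indeed the left shift of $-\mathbf{x}$ extended by $-y_{n-2}$. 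So antinegativity amounts to showing that $\mathbf{a}\in E_k(n)$ implies $-\mathbf{a}\notin E_k(n)$.

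First I will verify the identity coordinatewise, i.e.\ $f(u)+f(-u)=k$ for every $u\in\mathbb{Z}_k$.
\begin{itemize}
\item If $u=0$, then $-u=0$ and $f(0)+f(0)=k/2+k/2=k$.
\item If $u\neq0$, viewed as an integer in $[1,k-1]$, then $-u$ is represented by $k-u\in[1,k-1]$, which is nonzero, so $f(u)+f(-u)=u+(k-u)=k$.
\end{itemize}
Summing over the $n$ coordinates gives $w^*(\mathbf{a})+w^*(-\mathbf{a})=nk$.

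Now suppose $(\mathbf{x},\mathbf{y})$ is an edge of $U_k(n-1)$ with label $\mathbf{a}$, so that $w^*(\mathbf{a})<nk/2$. Then
\[ w^*(-\mathbf{a})=nk-w^*(\mathbf{a})>nk/2, \]
so $-\mathbf{a}\notin E_k(n)$. Hence $(-\mathbf{x},-\mathbf{y})$ is not an edge of $U_k(n-1)$, which is exactly Definition~\ref{definition:antinegative}.

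There is no real obstacle here. The only points needing care are the coordinatewise identity, in particular the treatment of $0$ (which is precisely why $f(0)$ is set to $k/2$), and the observation that negating an edge label gives the label of the negated vertex pair. The strict inequality in the definition of $E_k(n)$ takes care of the boundary case $w^*=nk/2$, which would include self-negative tuples, since these satisfy $w^*(\mathbf{a})=nk/2$.
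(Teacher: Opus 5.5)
Your proof is correct and takes the same approach as the paper, which derives the lemma immediately from the identity $w^*(\mathbf{a})=nk-w^*(-\mathbf{a})$. You additionally verify that identity coordinatewise via $f(u)+f(-u)=k$, and you check that negating an edge label corresponds to negating the vertex pair; the paper leaves both of these implicit.
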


\begin{example}[cf.\ Example 5.2 of \cite{Mitchell26}]  \label{example:E33}
As an example, consider the case $k=3$ and $n=3$. The ten 3-ary 3-tuples having pseudoweight less
than 4.5 are
\[ 111,~011,~101,~110,~001,~010,~100,~112,~121,~211, \]
forming the set $E_3(3)$.  These can readily be arranged to form a $\mathcal{NAS}_3(3)$ of period
10 (less than the maximum possible value which is 13), e.g.:
\[  [1110010112]. \]
\end{example}

While the above analysis immediately shows how to construct an $\mathcal{NAS}_k(n)$ for any $k$ and
$n$ (from Theorem~\ref{theorem:correspondence}), the sequences will not be maximal, since they do
not contain any $n$-tuples with weight exactly $kn/2$.  We next show how, if $n$ is odd, we can
rectify this.  We first need to introduce a little more notation.

\begin{definition}[\cite{Mitchell26a}]
Suppose $n\geq2$ and $k\geq3$. Let $H_k(n-1)$ be the subgraph of $B_k(n-1)$ restricted to edges of
pseudoweight precisely $kn/2$.
\end{definition}

\subsection{The construction}

Following \cite{Mitchell26a}, we present a simple way of dividing the edges ($n$-tuples) in
$H_k(n-1)$ into edge-disjoint circuits. Let $(a_0,a_1,\dots,a_{n-1})$ be a $k$-ary $n$-tuple, i.e.\
an edge in $B_k(n-1)$. Let $m$ be the smallest positive integer $c$ such that
$a_i=a_{\overline{i+c}}$ for every $i$ ($0\leq i<n$), where $\overline{x}=x\bmod n$. We write
$[a_0,a_1,\dots,a_{n-1}]$ for the circuit in $B_k(n-1)$ consisting of edges
\[ (a_0,a_1,\dots,a_{n-1}), (a_1,a_2\dots,a_{n-1},a_0), \dots,
(a_{m-1},\dots,a_{n-1},a_0,a_1,\dots,a_{m-2}), \] and say the circuit has period $m$.

\begin{lemma}[\cite{Mitchell26a}]  \label{lemma:k_odd_partitioning edges}  \label{lemma:defining_circuits}
Suppose $k\geq3$ and $n\geq 3$.
\begin{enumerate}
\item[i)] If $(a_0,a_1,\dots,a_{n-1})$ is a $k$-ary $n$-tuple of pseudoweight $kn/2$, i.e.\ an
    edge in $H_k(n-1)$, then $[a_0,a_1,\dots,a_{n-1}]$ is a circuit in $H_k(n-1)$ of
     period dividing $n$.
\item[ii)] Let~$\mathcal{C}_k(n-1)$ be the set of all circuits of the form
    $[a_0,a_1,\dots,a_{n-1}]$, where $(a_0,a_1,\dots,a_{n-1})$ is an edge in $H_k(n-1)$; then
    $\mathcal{C}_k(n-1)$ forms an edge-disjoint partition of the edges in $H_k(n-1)$.
\end{enumerate}
\end{lemma}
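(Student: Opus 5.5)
The plan is to exploit the fact that pseudoweight is invariant under cyclic rotation, since it is a sum of $f(a_i)$ over all positions. Write $\overline{x}=x\bmod n$ throughout.

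For part (i), let $\mathbf{a}=(a_0,\dots,a_{n-1})$ have pseudoweight $kn/2$, and let $m$ be the least positive $c$ with $a_i=a_{\overline{i+c}}$ for all $i$.

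\begin{itemize}
\item Each rotation $(a_j,a_{j+1},\dots,a_{n-1},a_0,\dots,a_{j-1})$ has the same multiset of entries as $\mathbf{a}$. Hence it has pseudoweight $kn/2$ and is an edge of $H_k(n-1)$.
\item The rotation starting at $a_j$ ends at vertex $(a_{\overline{j+1}},\dots,a_{\overline{j+n-1}})$. This is exactly the start vertex of the rotation starting at $a_{\overline{j+1}}$, so consecutive rotations are consecutive edges in $B_k(n-1)$.
\item The last listed edge, starting at $a_{m-1}$, is followed by the rotation starting at $a_{\overline{m}}$. By the definition of $m$ (shifting by $m$ fixes the tuple), this equals the first edge $\mathbf{a}$. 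So the listed edges close up into a circuit.
\item To see that $m$ divides $n$: the set of $c\in\mathbb{Z}_n$ with $a_i=a_{\overline{i+c}}$ for all $i$ is a subgroup of $\mathbb{Z}_n$. It contains $n\equiv 0$, and its least positive generator $m$ divides $n$.
\item The $m$ listed edges are distinct. If the rotations at $j$ and $j'$ coincide with $0\le j<j'<m$, then $j'-j$ would be a smaller positive period, contradicting the minimality of $m$.
\end{itemize}

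Thus $[a_0,\dots,a_{n-1}]$ is a circuit in $H_k(n-1)$ of period $m\mid n$.

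For part (ii), call two edges of $H_k(n-1)$ equivalent if one is a cyclic rotation of the other. This is an equivalence relation, since rotations form the group $\mathbb{Z}_n$ acting on $n$-tuples, and it preserves $H_k(n-1)$ by the invariance above. The edge set of the circuit generated by $\mathbf{a}$ is exactly the orbit of $\mathbf{a}$:

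\begin{itemize}
\item the circuit lists the rotations by $0,\dots,m-1$;
\item any rotation by $j$ equals the rotation by $\overline{j}\bmod m$, because shifting by $m$ fixes the tuple.
\end{itemize}

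So every edge of $H_k(n-1)$ lies in the circuit it generates, and two circuits in $\mathcal{C}_k(n-1)$ either share an edge or are edge-disjoint. If they share an edge, both have the same orbit as their edge set, and they are the same circuit up to choice of starting point. Hence $\mathcal{C}_k(n-1)$, taken with duplicates identified, is an edge-disjoint partition of the edges of $H_k(n-1)$.

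The main care point is the bookkeeping in part (i): showing that $m$ divides $n$ and that the rotations listed are distinct yet close up. The subgroup argument handles both. The hypothesis $n\ge3$ is not really needed beyond keeping the vertices nontrivial.
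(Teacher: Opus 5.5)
The paper does not prove this lemma; it quotes it from \cite{Mitchell26a}, so there is no argument in the text to compare yours against. Your proof is correct and self-contained:
\begin{itemize}
\item rotation invariance of $w^*$ puts every rotation of the tuple in $H_k(n-1)$;
\item consecutive rotations chain head-to-tail in $B_k(n-1)$;
\item the shifts fixing the tuple form a subgroup of $\mathbb{Z}_n$, so the minimal period $m$ divides $n$, and this gives both the closure of the walk and the distinctness of the $m$ listed edges;
\item identifying each circuit's edge set with a rotation orbit gives the edge-disjoint partition in (ii).
\end{itemize}

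Your argument uses nothing about pseudoweight beyond the fact that the edge set of $H_k(n-1)$ is closed under cyclic rotation. It therefore shows more generally that any rotation-closed set of $n$-tuples splits into such circuits. The hypotheses $n\geq3$ and $w^*=kn/2$ are only there because of how the lemma is used later.

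This orbit description is also what the paper relies on implicitly afterwards. Because negation commutes with rotation, a circuit containing one self-negative tuple consists entirely of self-negative tuples, and the negative of a circuit in $\mathcal{C}^*_k(n-1)$ is again such a circuit. These are the facts needed for the pairing in the odd-$n$ construction.
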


Observe that if a circuit contains a self-negative $n$-tuple, then all the $n$-tuples
in the circuit are self-negative, and we refer to the circuit as being self-negative.

\begin{definition}
Let $\mathcal{C}^*_k(n-1)$ be the set of non-self-negative circuits in $\mathcal{C}_k(n-1)$.
\end{definition}

The following observation is key.

\begin{lemma}
Suppose $k\geq3$ and $n\geq 3$ is odd.  No circuit in $\mathcal{C}^*_k(n-1)$ can contain both an
$n$-tuple and its negative.
\end{lemma}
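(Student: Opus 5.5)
The plan is to exploit the cyclic structure of the circuits in $\mathcal{C}_k(n-1)$ together with the oddness of $n$. Write $\sigma$ for the cyclic left shift on $n$-tuples, $\sigma(a_0,a_1,\dots,a_{n-1})=(a_1,\dots,a_{n-1},a_0)$. By definition the circuit $[a_0,a_1,\dots,a_{n-1}]$ of period $m$ has edge set $\{\sigma^j(\mathbf{a}) : 0\leq j<m\}$, where $\mathbf{a}=(a_0,\dots,a_{n-1})$. Negation acts coordinatewise, so it commutes with $\sigma$: $\sigma(-\mathbf{u})=-\sigma(\mathbf{u})$.

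First I will establish the basic fact about $m$. The set $\{t\in\mathbb{Z} : \sigma^t(\mathbf{a})=\mathbf{a}\}$ is a subgroup of $\mathbb{Z}$ containing $n$. By minimality of $m$ it equals $m\mathbb{Z}$, so $m\mid n$, consistent with Lemma~\ref{lemma:defining_circuits}(i). Since $n$ is odd, $m$ is odd.

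Next, suppose for contradiction that a circuit in $\mathcal{C}^*_k(n-1)$ contains both $\sigma^i(\mathbf{a})$ and $-\sigma^i(\mathbf{a})$. Then $-\sigma^i(\mathbf{a})=\sigma^j(\mathbf{a})$ for some $j$. Applying $\sigma^{-i}$ and using commutation gives $-\mathbf{a}=\sigma^{c}(\mathbf{a})$ with $c=j-i$. Applying $\sigma^c$ again yields
\[ \sigma^{2c}(\mathbf{a})=\sigma^c(-\mathbf{a})=-\sigma^c(\mathbf{a})=\mathbf{a}, \]
so $m\mid 2c$. Since $m$ is odd, $m\mid c$, hence $\sigma^c(\mathbf{a})=\mathbf{a}$. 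This gives $\mathbf{a}=-\mathbf{a}$, so $\mathbf{a}$ is self-negative. By the observation preceding the definition of $\mathcal{C}^*_k(n-1)$, the whole circuit is then self-negative, contradicting membership in $\mathcal{C}^*_k(n-1)$.

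There is no real obstacle here. The only step needing care is the parity step, where the oddness of $m$ is used to pass from $m\mid 2c$ to $m\mid c$; this is exactly where the hypothesis that $n$ is odd enters. For even $n$ the argument genuinely fails: $\sigma^{n/2}$ can map a tuple to its negative.
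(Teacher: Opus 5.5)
Your proof is correct and uses the same parity idea as the paper: a cyclic shift of odd order cannot carry a non-self-negative tuple to its negative, because negation has order two. The paper simply iterates the relation $-\mathbf{a}=\sigma^{c}(\mathbf{a})$ $n$ times to get $\mathbf{a}=\sigma^{nc}(\mathbf{a})=(-1)^n\mathbf{a}=-\mathbf{a}$, which avoids your detour through the minimal period $m$, $m\mid n$, and $m\mid 2c\Rightarrow m\mid c$.
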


\begin{proof}
Suppose not, i.e.\ suppose
\[ (a_0,a_1,\dots,a_{n-1})=-(a_{\overline{j}},a_{\overline{j+1}},\dots,a_{\overline{j-1}}) \]
where $\overline{j}=j\bmod n$.  Hence
\[ a_i=(-1)^s a_{\overline{sj+i}} \]
for every $i$ ($0\leq i<n$) and every $s\geq0$.  Setting $s=n$ and observing that $n$ is odd
immediately gives
\[ a_i=- a_i \]
for every $i$ ($0\leq i<n$), and hence $(a_0,a_1,\dots,a_{n-1})$ is self-negative, giving a
contradiction.
\end{proof}

This immediately gives the following.

\begin{lemma}  \label{lemma:C*_pairs}
Suppose $k\geq3$ and $n\geq 3$ is odd.  The circuits in $\mathcal{C}^*_k(n-1)$ can be divided into
pairs, consisting of a circuit and its negative.
\end{lemma}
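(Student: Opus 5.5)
The plan is to show that negation induces a fixed-point-free involution on $\mathcal{C}^*_k(n-1)$; its orbits are then exactly the required pairs.

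\textbf{Step 1: negation maps circuits to circuits.} Take an edge $\mathbf{a}=(a_0,\dots,a_{n-1})$ of $H_k(n-1)$. Since $w^*(-\mathbf{a})=nk-w^*(\mathbf{a})$, we get $w^*(-\mathbf{a})=kn/2$, so $-\mathbf{a}$ is again an edge of $H_k(n-1)$. Negation commutes with cyclic shifts, and $a_i=a_{\overline{i+c}}$ holds if and only if $-a_i=-a_{\overline{i+c}}$. Hence the set of edges of $[a_0,\dots,a_{n-1}]$, once negated, is precisely the edge set of the circuit $[-a_0,\dots,-a_{n-1}]$, and the two circuits have the same period. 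By Lemma~\ref{lemma:defining_circuits}(ii), this circuit belongs to $\mathcal{C}_k(n-1)$.

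\textbf{Step 2: the image stays in $\mathcal{C}^*_k(n-1)$.} If the negative circuit were self-negative, then $-\mathbf{a}=\mathbf{a}$. That would make the original circuit self-negative, which is excluded. So the map $C\mapsto -C$ sends $\mathcal{C}^*_k(n-1)$ to itself.

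\textbf{Step 3: the map is an involution.} Applying negation twice returns every edge to itself, so $-(-C)=C$.

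\textbf{Step 4: there are no fixed points.} Suppose $-C=C$ for some $C\in\mathcal{C}^*_k(n-1)$. Then $-\mathbf{a}$ is an edge of $C$, so $C$ contains both an $n$-tuple and its negative. This contradicts the preceding lemma, which uses the hypothesis that $n$ is odd.

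A fixed-point-free involution partitions its domain into $2$-element orbits $\{C,-C\}$, which gives the claimed pairing. Step 4 is the only substantive step, and it is already supplied by the preceding lemma. The remaining care is in Step 1: one must confirm that the circuits are well defined as edge sets, so that $-C$ does not depend on which edge of $C$ is chosen to represent it. This follows from the partition property in Lemma~\ref{lemma:defining_circuits}(ii).
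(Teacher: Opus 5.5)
Your proposal is correct and is the paper's argument: the paper calls this lemma an immediate consequence of the preceding lemma, and your fixed-point-free involution (negation preserves pseudoweight $kn/2$, commutes with cyclic shifts, preserves non-self-negativity, and has no fixed points because $n$ is odd) just spells out that deduction. One small point: $-C\in\mathcal{C}_k(n-1)$ and its independence from the chosen representative already follow from the definition of $\mathcal{C}_k(n-1)$ and the fact that negation commutes with cyclic shifts, so you do not need the partition property in Lemma~\ref{lemma:defining_circuits}(ii).
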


We can now describe our simple construction.

\begin{construction}  \label{the construction:n_odd}
Suppose $k\geq3$ and $n\geq 3$ is odd.  Select one of each pair of circuits in
$\mathcal{C}^*_k(n-1)$, as per Lemma~\ref{lemma:C*_pairs}, and let $F_k(n)$ be the set of $k$-ary
$n$-tuples containing all the $n$-tuples in $E_k(n)$ together with all the $n$-tuples occurring in
the selected circuits from $\mathcal{C}^*_k(n-1)$. Let $W_k(n-1)$ be the subgraph of $B_k(n-1)$ whose edges correspond to $n$-tuples in $F_k(n)$ and whose vertices belong to these edges.
\end{construction}

\begin{lemma}  \label{construction:n_odd}
Suppose $k\geq3$ and $n\geq 3$ is odd.   Then $W_k(n-1)$ is an
antinegative Eulerian subgraph of $B_k(n-1)$, and $|F_k(n)|=\frac{k^n-\delta^n}{2}$, where
$\delta=1$ or $2$ depending on whether $k$ is odd or even.
\end{lemma}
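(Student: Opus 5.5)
We verify the three required properties of $W_k(n-1)$ in turn: balance, antinegativity, and connectivity. We then count the edges $F_k(n)$.

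\emph{Balance.} By Theorem~\ref{theorem:pseudoweight}, $E_k(n)$ is the edge set of the Eulerian graph $U_k(n-1)$, so it is balanced. By Lemma~\ref{lemma:defining_circuits}, each selected element of $\mathcal{C}^*_k(n-1)$ is a circuit in $H_k(n-1)$, and a circuit contributes equally to in- and out-degree at each vertex it visits. The selected circuits are pairwise edge-disjoint by part~ii) of that lemma. They are also disjoint from $E_k(n)$, since their edges have pseudoweight exactly $kn/2$. Hence the union $F_k(n)$ is balanced.

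\emph{Antinegativity.} Since $w^*(-\mathbf{a})=nk-w^*(\mathbf{a})$, the negative of an edge of pseudoweight $<nk/2$ has pseudoweight $>nk/2$ and so is not in $F_k(n)$. The negative of an edge of pseudoweight $kn/2$ again has pseudoweight $kn/2$. So it remains to check pairs $\mathbf{a},-\mathbf{a}$ both lying in selected circuits.
\begin{itemize}
\item If they lie in the same circuit, this contradicts the lemma preceding Lemma~\ref{lemma:C*_pairs}, which uses that $n$ is odd.
\item If they lie in different circuits $C$ and $C'$, then $C'$ contains $-\mathbf{a}$, so $C'$ is the negative circuit $-C$. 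This is impossible, because only one circuit from each pair was selected.
\end{itemize}
Self-negative circuits are excluded by definition.

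\emph{Connectivity.} This is the main obstacle. $U_k(n-1)$ is connected, being Eulerian, so it suffices to show that each selected circuit shares a vertex with $U_k(n-1)$.
\begin{itemize}
\item Take a selected circuit $[a_0,\dots,a_{n-1}]$ of pseudoweight $kn/2$ that is not self-negative. Some entry, say $a_j$, is neither $0$ nor $k/2$. Then $f(a_j)\neq k/2$, so after rotating we may assume $f(a_0)>k/2$ or $f(a_0)<k/2$.
\item Consider the vertex $(a_1,\dots,a_{n-1})$ of the circuit. Its in-edges are $(x,a_1,\dots,a_{n-1})$, of pseudoweight $kn/2-f(a_0)+f(x)$.
\item If $f(a_0)>k/2$, choose $x$ with $f(x)<f(a_0)$, for instance $x=1$, or $x=0$ when $a_0\neq1$. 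That edge lies in $E_k(n)$, so the vertex belongs to $U_k(n-1)$.
\item If instead $f(a_0)<k/2$, use the out-edges $(a_1,\dots,a_{n-1},y)$ of the vertex $(a_1,\dots,a_{n-1})$, which also lies on the circuit. Their pseudoweight is $kn/2-f(a_0)+f(y)$, so we need $f(y)<f(a_0)$. This is available unless $a_0=1$ (or the analogous minimum).
\end{itemize}
To cover the edge cases, I would first try to pick a rotation so that $a_0$ is the entry with largest $f$-value, which is $>k/2$. This works when some entry exceeds $k/2$. If no entry exceeds $k/2$, then all $f(a_i)\le k/2$ with average $k/2$, which forces every $f(a_i)=k/2$, i.e.\ every entry is $0$ or $k/2$. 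That is self-negative, a contradiction. So in all cases a vertex of the circuit has an in-edge of smaller pseudoweight, e.g.\ $x=1$ if $a_0\neq1$; and $a_0=1$ with $1>k/2$ is impossible for $k\geq3$.

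\emph{Counting.} There are $\delta^n$ self-negative $n$-tuples. Every other $n$-tuple has pseudoweight $<$, $=$ or $>$ $nk/2$, and negation pairs these up. Exactly half of the non-self-negative tuples of pseudoweight $kn/2$ are chosen, because circuits pair with their negatives (Lemma~\ref{lemma:C*_pairs}), and $E_k(n)$ is exactly half of those with pseudoweight $\neq kn/2$. Hence $|F_k(n)|=(k^n-\delta^n)/2$.
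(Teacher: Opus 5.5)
Your proof is correct and follows essentially the paper's argument. Balance comes from adjoining edge-disjoint complete circuits to the Eulerian graph $U_k(n-1)$. Antinegativity comes from $w^*(-\mathbf{a})=nk-w^*(\mathbf{a})$ together with selecting one circuit per pair. The count is $\frac{k^n-r}{2}+\frac{r-\delta^n}{2}$ in different words. Connectivity comes from rotating a large entry to the front, so that the vertex $(a_1,\dots,a_{n-1})$ has pseudoweight at most $(n-1)k/2$ and hence an in-edge such as $(1,a_1,\dots,a_{n-1})$ in $E_k(n)$.

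You are a little more explicit than the paper: you separate the same-circuit case, which is where $n$ odd is used, and you derive an entry with $f(a_0)>k/2$ from non-self-negativity. The exploratory $f(a_0)<k/2$ branch can simply be deleted.
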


\begin{proof}
From Theorem~\ref{theorem:pseudoweight} and Lemma~\ref{lemma:Ukn_antinegative} we know that
$E_k(n)$ is the set of edges for antinegative Eulerian subgraph of $B_k(n-1)$. All the edges in
$E_k(n)$ have pseudoweight less than $kn/2$, and hence since they only contain edges of weight
exactly $kn/2$, neither an $n$-tuple nor its negative from a circuit in $\mathcal{C}^*_k(n-1)$ can
equal an $n$-tuple in $E_k(n)$; moreover, since we only selected one circuit of each pair from
$\mathcal{C}^*_k(n-1)$, it follows that $F_k(n)$ is the set of edges for an antinegative subgraph
of $B_k(n-1)$.

If $r$ is the number of $k$-ary $n$-tuples of pseudoweight exactly $kn/2$, then clearly
$|E_k(n)|=\frac{k^n-r}{2}$.  Moreover, since the circuits in $\mathcal{C}^*_k(n-1)$ cover all the
non-self-negative $n$-tuples of pseudoweight exactly $kn/2$, the number of $n$-tuples occurring in
a circuit in $\mathcal{C}^*_k(n-1)$ is $r-\delta^n$.  Since $F_k(n)$ contains precisely half the
$n$-tuples occurring in a circuit in $\mathcal{C}^*_k(n-1)$,
\[ |F_k(n)| = |E_k(n)| + \frac{r-\delta^n}{2} = \frac{k^n-r}{2} + \frac{r-\delta^n}{2} =
   \frac{k^n-\delta^n}{2}. \]

It remains to show that the subgraph is Eulerian, i.e.\ it is balanced and connected.  Since
$U_k(n-1)$ is Eulerian (from Theorem~\ref{theorem:pseudoweight}), and since we are adding complete
circuits to $E_k(n)$, it follows that the subgraph is balanced. Moreover, every circuit in
$\mathcal{C}^*_k(n-1)$ must contain an edge whose corresponding
$n$-tuple contains at least one element in the range $[k/2,k-1]$, and hence it must
contain an edge on a vertex whose corresponding $(n-1)$-tuple has pseudoweight at most $(n-1)k/2$.
Now every vertex whose corresponding $(n-1)$-tuple has pseudoweight at most $(n-1)k/2$ has ingoing
and outgoing edges in $E_k(n)$, and hence the subgraph with edge set $F_k(n)$ is connected.
\end{proof}

Combining Lemma~\ref{construction:n_odd} with Theorems~\ref{theorem:correspondence} and
\ref{theorem:NAS_bound} immediately gives the following.

\begin{theorem}  \label{theorem:maximal_NAS_n_odd}
If $k\geq3$ and $n\geq 3$ is odd then there exists a maximal $\mathcal{NAS}_k(n)$.
\end{theorem}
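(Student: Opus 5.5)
The proof is a direct assembly of the three results already established. Fix $k\geq3$ and odd $n\geq3$, and apply Construction~\ref{the construction:n_odd}: choose one circuit from each negative pair in $\mathcal{C}^*_k(n-1)$, which Lemma~\ref{lemma:C*_pairs} guarantees is possible, and form $F_k(n)$ and the subgraph $W_k(n-1)$.

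Lemma~\ref{construction:n_odd} then tells us two things. First, $W_k(n-1)$ is an antinegative Eulerian subgraph of $B_k(n-1)$. Second, its edge set $F_k(n)$ has exactly $\frac{k^n-\delta^n}{2}$ elements, where $\delta=1$ for $k$ odd and $\delta=2$ for $k$ even.

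Next I apply the second half of Theorem~\ref{theorem:correspondence} with $T=W_k(n-1)$ and $m=|F_k(n)|$. This yields an $\mathcal{NAS}_k(n)$ $S$ of period $m$ with $E_n(S)=W_k(n-1)$. Concretely, $S$ is read off from an Eulerian circuit of $W_k(n-1)$.

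Finally, I check that $S$ is maximal by comparing $m$ with Theorem~\ref{theorem:NAS_bound}. For $k$ odd, $\delta^n=1$ and $m=\frac{k^n-1}{2}$. For $k$ even, $\delta^n=2^n$ and $m=\frac{k^n-2^n}{2}$. In both cases $m$ equals the bound, so $S$ is maximal by definition.

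None of these steps is a real obstacle, since all the substantive work is already contained in Lemma~\ref{construction:n_odd}. The only point needing care is that the counting in that lemma relies on oddness of $n$ in two places. It is needed so that no non-self-negative circuit contains both a tuple and its negative, which is what makes the pairing of Lemma~\ref{lemma:C*_pairs} well defined. It is also needed so that the self-negative tuples of pseudoweight $kn/2$, numbering $\delta^n$, are exactly the ones excluded.
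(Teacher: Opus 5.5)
Your proposal is correct and matches the paper's argument: Lemma~\ref{construction:n_odd} supplies an antinegative Eulerian subgraph with $\frac{k^n-\delta^n}{2}$ edges, Theorem~\ref{theorem:correspondence} turns it into an $\mathcal{NAS}_k(n)$ of that period, and Theorem~\ref{theorem:NAS_bound} shows that period is maximal. One small correction to your closing remark: the self-negative tuples have pseudoweight exactly $kn/2$ and number $\delta^n$ for every $n$, not only odd $n$ (since $f(0)=k/2$ and, for even $k$, $f(k/2)=k/2$), so oddness is really used only once, to obtain the pairing of Lemma~\ref{lemma:C*_pairs}, and that pairing gives both antinegativity and the exact halving count.
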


\begin{example}[cf.\ Example 2.1 of \cite{Mitchell26a}]  \label{example:n3k3}
Suppose $n=k=3$.  The seven edges in $H_3(2)$ can be partitioned into three circuits:
\[ [000], [012], [021] \]
of periods 1, 3 and 3, respectively.  Thus $\mathcal{C}^*_3(2)=\{[012], [021]\}$, i.e.\ it contains
one pair of a circuit and its negative.  Choosing one, e.g.\ $[012]$, and adding it to the
$\mathcal{NAS}_3(3)$ of Example~\ref{example:E33}, i.e.\ $[1110010112]$, we obtain the following
maximal $\mathcal{NAS}_3(3)$ of period 13:
 \[ [1110012010112]. \]
\end{example}

\subsection{The \texorpdfstring{$n$}{n} even case is not so simple}

Things are less straightforward if $n$ is even, since in this case a circuit in
$\mathcal{C}^*_k(n-1)$ \emph{can} contain both an $n$-tuple and its negative. Indeed, the following
simple example means that, for $n$ even, we cannot create maximal NASs simply by taking all tuples
of pseudoweight less than $nk/2$ and then adding in selected tuples of weight exactly $nk/2$.

\begin{example}
Suppose $k=3$ and $n=4$. If a maximal $\mathcal{NAS}_3(4)$ exists, then it cannot only contain
$4$-tuples with pseudoweight less than or equal to $kn/2=6$.  To see why this is true, suppose that
$S$ is a maximal $\mathcal{NAS}_3(4)$ (i.e.\ one with period 40) containing only 4-tuples with
pseudoweight less than or equal to $6$. It must contain all 31 4-tuples of pseudoweight less than
6, and half the 18 4-tuples with pseudoweight exactly 6.

Next observe that $S$ cannot contain a 3-tuple of the form $(2,1,2)$. If $S$ did contain an
occurrence of $(2,1,2)$ then such a 3-tuple must be preceded and succeeded by 1 within $S$, given
that $S$ only contains 4-tuples of pseudoweight at most 6.  However, if $S$ contains the 5-tuple
$(1,2,1,2,1)$ then $S$ will contain the 4-tuples $(1,2,1,2)$ and $(2,1,2,1)$, which gives a
contradiction since they are the negative of each other.  Hence $S$ cannot contain any 3-tuples of
the form $(2,1,2)$.  Now consider the 4-tuples $(1,2,1,2)$ and $(2,1,2,1)$ of weight 6. $S$ must
contain one of them --- which immediately gives a contradiction since they both contain the 3-tuple
$(2,1,2)$.
\end{example}

\section{Maximal sequences for \texorpdfstring{$n=2$}{n=2}}  \label{section:maximal_n2}

As a first step towards demonstrating the existence of maximal negative avoiding sequences for all
even $n$, we start by looking at $n=2$.  We consider the $k$ odd and even cases separately.

\subsection{Maximal sequences for \texorpdfstring{$n=2$}{n=2} and \texorpdfstring{$k$}{k} odd}

\begin{definition}
Suppose $k\geq3$ is odd.  Let $Y_k\subseteq \mathbb{Z}_k^2$ be the following set of $k$-ary
2-tuples:
\[ \{ (x,y): \overline{y-x}\in \{1,2,\dots,(k-1)/2\} \} \cup \{ (x,x): x\in \{1,2,\dots,(k-1)/2\} \}, \]
where $\overline{m}$ denotes $m\bmod k$.
\end{definition}

\begin{lemma}  \label{lemma:construction_n2}
Suppose $k\geq3$ is odd.  Then:
\begin{itemize}
\item[(i)] $|Y_k|=\frac{k^2-1}{2}$;
\item[(ii)] the 2-tuples in $Y_k$ correspond to the set of edges of an Eulerian subgraph of
    $B_k(1)$;
\item[(iii)] if $(x,y)\in Y_k$ then $(\overline{-x},\overline{-y})\not\in Y_k$.
\end{itemize}
\end{lemma}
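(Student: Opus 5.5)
We verify the three parts directly from the definition of $Y_k$. Write $h=(k-1)/2$, and split $Y_k$ into the ``difference part'' $A=\{(x,y):\overline{y-x}\in\{1,\dots,h\}\}$ and the ``loop part'' $L=\{(x,x):1\leq x\leq h\}$. These two sets are disjoint, since every tuple in $L$ has difference $0$.

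For (i), we count each part. For each $x\in\mathbb{Z}_k$ there are exactly $h$ choices of $y$ with $\overline{y-x}\in\{1,\dots,h\}$, so $|A|=kh$. Clearly $|L|=h$. Hence
\[ |Y_k|=(k+1)h=\frac{(k+1)(k-1)}{2}=\frac{k^2-1}{2}. \]

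For (iii), take $(x,y)\in Y_k$. If $(x,y)\in A$ with $d=\overline{y-x}\in\{1,\dots,h\}$, then $(\overline{-x},\overline{-y})$ has difference $\overline{-d}=k-d\in\{h+1,\dots,k-1\}$. This is neither in $\{1,\dots,h\}$ nor $0$, so the negative lies in neither $A$ nor $L$. If instead $(x,x)\in L$ with $1\leq x\leq h$, then $\overline{-x}=k-x\in\{h+1,\dots,k-1\}$. So $(\overline{-x},\overline{-x})$ has difference $0$, hence is not in $A$, and its common entry is outside $\{1,\dots,h\}$, hence it is not in $L$.

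For (ii), recall that in $B_k(1)$ the vertices are the elements of $\mathbb{Z}_k$, and the 2-tuple $(x,y)$ is an edge from $x$ to $y$. We first check balance. In $A$, every vertex $x$ has out-degree $h$ (one edge to each $x+1,\dots,x+h$) and in-degree $h$ (one edge from each $x-1,\dots,x-h$). The edges of $L$ are loops, and each loop adds $1$ to both the in-degree and out-degree of its vertex. So every vertex is balanced.

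Next we check connectivity. Since $h\geq1$, $A$ contains the edges $(x,x+1)$ for all $x$. These form the Hamiltonian cycle $0\to1\to\cdots\to k-1\to0$, so the subgraph spans all $k$ vertices and is strongly connected. Being connected and balanced, it is Eulerian in the sense of Definition~\ref{definition;Eulerian}.

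None of these steps is a real obstacle. The only point needing care is the loop case in (iii), where the argument must use the range of the entry $x$ itself rather than a difference.
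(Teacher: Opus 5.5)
Your proof is correct and follows the paper's argument essentially step for step: the same count for (i), the same in/out-degree check for balance in (ii), and the same difference-based case split (non-loop versus loop) for (iii). The only cosmetic difference is connectivity: you exhibit the cycle $0\to1\to\cdots\to k-1\to0$, which gives strong connectivity directly. The paper instead notes that any two distinct vertices are joined by an edge in one direction or the other, which together with balance suffices.
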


\begin{proof}
For each of the $k$ possible values for $x$, there are precisely $(k-1)/2$ values $y$ for which
$\overline{y-x}\in \{1,2,\dots,(k-1)/2\}$. Hence
$|Y_k|=\frac{k(k-1)}{2}+\frac{k-1}{2}=\frac{k^2-1}{2}$, and (i) follows.

Consider any vertex $(x)$ in $B_k(1)$. If $x\in \{1,2,\dots,(k-1)/2\}$ then $(x)$ has in-degree and
out-degree $(k+1)/2$ in the subgraph defined by $Y_k$; otherwise $(x)$ has in-degree and out-degree
$(k-1)/2$. That is every vertex has its in-degree equal to its out-degree. To show it is Eulerian
it remains to show that it is connected.  Suppose $(x)$ and $(y)$ are distinct vertices.  If
$\overline{y-x}\in \{1,2,\dots,(k-1)/2\}$ then $(x,y)\in Y_k$; otherwise $(y,x)\in Y_k$, and hence
(ii) follows.

Finally, if $x\neq y$ then $(x,y)\in Y_k$ implies $\overline{y-x}\in \{1,2,\dots,(k-1)/2\}$, and
hence $\overline{x-y}\not\in \{1,2,\dots,(k-1)/2\}$, i.e.\ $(\overline{-x},\overline{-y})\not\in
Y_k$.  If $x=y$ then $(x,x)\in Y_k$ implies $x\in \{1,2,\dots,(k-1)/2\}$, and hence
$\overline{-x}\not\in \{1,2,\dots,(k-1)/2\}$, i.e.\ $(\overline{-x},\overline{-x})\not\in Y_k$, and
(iii) follows.
\end{proof}

Since, from Lemma~\ref{lemma:construction_n2}(ii), the 2-tuples in $Y_k$ correspond to the set of
edges of an Eulerian subgraph of $B_k(1)$, it is possible to define a sequence $S$ in which each of
the 2-tuples in $Y_k$ occurs exactly once. Such a sequence is a $\mathcal{NAS}_k(2)$ from
Lemma~\ref{lemma:construction_n2}(iii).  Moreover, from Lemma~\ref{lemma:construction_n2}(i), $S$
has period $|Y_k|=\frac{k^2-1}{2}$. This immediately yields the following from
Theorems~\ref{theorem:correspondence} and \ref{theorem:NAS_bound}.

\begin{theorem}  \label{theorem:maximal_NAS_n2_kodd}
If $k\geq 3$ is odd there exists a maximal $\mathcal{NAS}_k(2)$.
\end{theorem}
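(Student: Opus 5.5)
The plan is to assemble the theorem directly from Lemma~\ref{lemma:construction_n2} and the correspondence of Theorem~\ref{theorem:correspondence}. Let $T$ be the subgraph of $B_k(1)$ whose edge set is $Y_k$, with vertices the $1$-tuples incident to these edges. We then check the three hypotheses needed on $T$, in order.

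First, by Lemma~\ref{lemma:construction_n2}(ii), $T$ is Eulerian. Recall the reason: every vertex $(x)$ has in-degree equal to out-degree, namely $(k+1)/2$ or $(k-1)/2$ according as $x\in\{1,\dots,(k-1)/2\}$ or not. Connectivity holds because, for any two distinct vertices, exactly one of the two directed edges between them lies in $Y_k$.

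Second, $T$ is antinegative in the sense of Definition~\ref{definition:antinegative}. In $B_k(1)$ an edge $((x),(y))$ is the $2$-tuple $(x,y)$, and its negative is $(\overline{-x},\overline{-y})$, which by Lemma~\ref{lemma:construction_n2}(iii) is not in $Y_k$. The only point to watch is that no self-negative $2$-tuple lies in $Y_k$. For $k$ odd the only one is $(0,0)$, which is excluded because $\overline{0-0}=0\notin\{1,\dots,(k-1)/2\}$ and $0$ is not in the list of allowed diagonal values; this is in any case part of (iii).

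Third, $T$ has $|Y_k|=(k^2-1)/2$ edges by Lemma~\ref{lemma:construction_n2}(i).

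The second half of Theorem~\ref{theorem:correspondence}, with $n=2$, now gives an $\mathcal{NAS}_k(2)$ $S$ of period $(k^2-1)/2$ with $E_2(S)=T$. This period equals the bound of Theorem~\ref{theorem:NAS_bound} for $k$ odd, so $S$ is maximal by definition. There is no real obstacle here. The only step needing care is confirming that the degree count in Lemma~\ref{lemma:construction_n2}(ii) correctly includes the loops $(x,x)$, which contribute one to both the in-degree and the out-degree of $(x)$. As a sanity check, $k=3$ gives $Y_3=\{(0,1),(1,2),(2,0),(1,1)\}$, realised by $[0,1,1,2]$ as in Example~\ref{example:small_examples}.
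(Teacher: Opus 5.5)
Your proposal is correct and takes essentially the same route as the paper: you combine parts (i)--(iii) of Lemma~\ref{lemma:construction_n2} with the second half of Theorem~\ref{theorem:correspondence} and the bound of Theorem~\ref{theorem:NAS_bound}. Your extra checks are all accurate: the loops are counted correctly in the degrees, $(0,0)$ is excluded, and exactly one of $(x,y),(y,x)$ lies in $Y_k$ when $x\neq y$.
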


\begin{example}
Suppose $k=3$.  Then
\[ Y_3 = \{ (0,1),~(1,2),~(2,0),~(1,1) \}. \]
These can be joined to create a maximal $\mathcal{NAS}_3(2)$ of period 4, namely: $[0,1,1,2]$ (cf.\
Example~\ref{example:small_examples}).
\end{example}

\subsection{Maximal sequences for \texorpdfstring{$n=2$}{n=2} and \texorpdfstring{$k$}{k} even}

\begin{construction}  \label{construction_n2_even}
Suppose $k\geq4$ is even. Observe that $U_k(1)$, the Eulerian subgraph of $B_k(1)$ with edge set
$E_k(2)$, contains (amongst others) the edges $(0,i),(i,i)$ for every $i$, $0<i<k/2$. Let
$U_k^{\prime}(1)$ be the subgraph of $B_k(1)$ obtained by removing these edges from $U_k(1)$ and
adjoining the edges $(0,-i),(-i,-i),(-i,i)$ for every $i$, $0<i<k/2$.
\end{construction}

\begin{lemma}
If $k\geq4$ is even then $U_k^{\prime}(1)$ is an antinegative Eulerian subgraph of $B_k(1)$
containing $(k^2-4)/2$ edges.
\end{lemma}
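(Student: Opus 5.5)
We check the three required properties of $U_k^{\prime}(1)$ in turn: the edge count, antinegativity, and the Eulerian property (balanced and connected).

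\textbf{Edge count.} First we describe $E_k(2)$ for $k$ even. Here $f(0)=k/2$, so a 2-tuple $(a,b)$ lies in $E_k(2)$ exactly when $f(a)+f(b)<k$.

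By Lemma~\ref{lemma:Ukn_antinegative} and the identity $w^*(\mathbf{a})=2k-w^*(-\mathbf{a})$, the tuples of pseudoweight $\neq k$ split into negative pairs, with exactly one member of each pair in $E_k(2)$. Hence $|E_k(2)|=(k^2-r)/2$, where $r$ is the number of tuples of pseudoweight exactly $k$.

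The tuples with $f(a)+f(b)=k$ are of three kinds:
\begin{itemize}
\item $(a,-a)$ for $0<a<k$, giving $k-1$ tuples;
\item $(0,k/2)$ and $(k/2,0)$;
\item $(0,0)$.
\end{itemize}
So $r=k+2$, and $|E_k(2)|=(k^2-k-2)/2$.

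The construction removes $2(k/2-1)=k-2$ edges and adds $3(k/2-1)$ edges. The net change is $+(k-2)/2$, giving
\[ \frac{k^2-k-2}{2}+\frac{k-2}{2}=\frac{k^2-4}{2}, \]
as required. We must also confirm that the removed edges really lie in $E_k(2)$:
\begin{itemize}
\item $(0,i)$ has pseudoweight $k/2+i<k$;
\item $(i,i)$ has pseudoweight $2i<k$.
\end{itemize}
The added edges $(0,-i)$, $(-i,-i)$ and $(-i,i)$ have pseudoweights $3k/2-i>k$, $2k-2i>k$ and $k$ respectively. So none of them is already in $E_k(2)$.

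\textbf{Antinegativity.} The negatives of the added edges are $(0,i)$, $(i,i)$ and $(i,-i)$.
\begin{itemize}
\item The first two are exactly the removed edges.
\item $(i,-i)$ has pseudoweight $k$, so it is not in $E_k(2)$. It is also not among the added edges, since $(-j,j)=(i,-i)$ would force $j=-i$, which is impossible for $0<i,j<k/2$.
\end{itemize}
Among the added edges themselves, each is the negative of none of the others, because their negatives all have entries in $\{0,1,\dots,k/2-1\}$. All the remaining edges of $E_k(2)$ have pseudoweight $<k$, so their negatives have pseudoweight $>k$. Those negatives could only clash with added edges of pseudoweight $>k$, namely $(0,-i)$ or $(-i,-i)$. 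But the negatives of these are the removed edges, which are no longer present. Hence $U_k^{\prime}(1)$ is antinegative.

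\textbf{Balance.} For each $i$, the removed pair $(0,i),(i,i)$ together with the added triple $(0,-i),(-i,-i),(-i,i)$ changes the degrees as follows:
\begin{itemize}
\item vertex $0$ loses one out-edge and gains one out-edge;
\item vertex $i$ loses in and out via $(0,i)$ and the loop $(i,i)$, and gains one in-edge via $(-i,i)$. The net effect is in-degree $-1$ and out-degree $-1$, since the loop counts once in each.
\end{itemize}
This needs care with the loop: removing $(0,i)$ lowers in-degree$(i)$ by one, and removing the loop lowers both in- and out-degree by one. Adding $(-i,i)$ raises in-degree$(i)$ by one. So the net change at $i$ is in $-1$, out $-1$, and $i$ stays balanced.

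At vertex $-i$, the gains are in$+1$ from $(0,-i)$, in$+1$/out$+1$ from the loop, and out$+1$ from $(-i,i)$. This is balanced.

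Since $U_k(1)$ is balanced by Theorem~\ref{theorem:pseudoweight}, so is $U_k^{\prime}(1)$.

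\textbf{Connectivity.} This is the step needing most care. We must make sure every vertex with edges still reaches $0$ and is reached from it.
\begin{itemize}
\item Vertices $-i$ are connected via $0\to -i\to i$.
\item Vertex $i$ ($0<i<k/2$) still has the edge $(i,0)$, of pseudoweight $i+k/2<k$, so it reaches $0$. It is entered from $-i$, which is reached from $0$.
\item The vertex $k/2$ must be checked separately. The edge $(k/2,j)$ lies in $E_k(2)$ for $0<j<k/2$, and $(j,k/2)$ likewise, so $k/2$ connects to such $j$, which are connected to $0$. Note this needs $k\geq4$, so that such a $j$ exists.
\item Vertex $0$ itself retains the edges $(j,0)$ and $(0,-j)$.
\end{itemize}
Hence $U_k^{\prime}(1)$ is strongly connected. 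Together with balance, this makes it Eulerian.
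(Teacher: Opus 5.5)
Your proof is correct and follows essentially the same route as the paper's: degree bookkeeping at $0$, $i$, $-i$ for balance, a hub-vertex connectivity check, comparing the adjoined edges with the negatives of the remaining ones, and the count $(k^2-k-2)/2+(k-2)/2$. You add detail the paper omits (deriving $|E_k(2)|$, strong connectivity through vertex $0$ rather than the paper's vertex $1$); one small slip is that the negative $(i,-i)$ of an added edge does not have all entries in $\{0,1,\dots,k/2-1\}$, so that sentence should compare first entries instead (negatives of added edges start in $\{0,\dots,k/2-1\}$, added edges start in $\{0\}\cup\{k/2+1,\dots,k-1\}$, and $(0,i)\neq(0,-j)$), which also settles $(i,-i)$ against $(0,-j)$ and $(-j,-j)$.
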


\begin{proof}
Making the specified modifications to $U_k(1)$ maintains the number of in- and out-edges to the
vertex with label $0$, reduces the number of in- and out- edges to vertex $i$, $0<i<k/2$, by $1$,
but not to zero, and increases the number of in- and out- edges to vertex $-i$, $0<i<k/2$, by $2$.
The subgraph $U_k^{\prime}(1)$ is therefore Eulerian as now vertex $k-1$ is connected to vertices
$0$ and $1$, and vertex $1$ remains connected to all other vertices. It is antinegative as the
adjoined edges are not negatives of the existing edges or of each other. Since two edges have been
removed and three edges adjoined for each $i$, $0<i<k/2$, $U_k^{\prime}(1)$ has $(k^2-k-2)/2 +
(k-2)/2 = (k^2-4)/2$ edges.
\end{proof}

We them immediately have the following result from Theorems~\ref{theorem:correspondence} and
\ref{theorem:NAS_bound}.

\begin{theorem}  \label{theorem:maximal_NAS_n2_keven}
If $k\geq 4$ is even there exists a maximal $\mathcal{NAS}_k(2)$.
\end{theorem}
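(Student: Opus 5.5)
The theorem follows by combining the preceding lemma with Theorems~\ref{theorem:correspondence} and \ref{theorem:NAS_bound}. The lemma asserts that $U_k^{\prime}(1)$ is an antinegative Eulerian subgraph of $B_k(1)$ with $(k^2-4)/2$ edges. Theorem~\ref{theorem:correspondence}, applied with $n=2$ (so that $B_k(n-1)=B_k(1)$) and $T=U_k^{\prime}(1)$, then gives an $\mathcal{NAS}_k(2)$ $S$ of period $m=(k^2-4)/2$ with $E_2(S)=U_k^{\prime}(1)$. Since $k$ is even and $n=2$, the bound of Theorem~\ref{theorem:NAS_bound} is $(k^2-2^2)/2=(k^2-4)/2$. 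So $S$ meets it and is maximal.

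The substance therefore lies in the lemma, and I would check its three ingredients carefully.

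\emph{Edge count.} For $k$ even, $E_k(2)$ excludes the four self-negative pairs with entries in $\{0,k/2\}$ and consists of exactly half of the remaining $2$-tuples. Here every $2$-tuple other than these four has pseudoweight different from $k$, and negation swaps pseudoweight $w$ with $2k-w$. Hence $|E_k(2)|=(k^2-4)/2$. For each $i$ the construction removes two edges and adds three, a net gain of one per $i$, so I would recheck this against the lemma's stated count. In particular I would confirm that $(-i,i)$ has pseudoweight exactly $k$ and that $(0,-i)$ and $(-i,-i)$ lie outside $E_k(2)$, so the count is consistent with the bound.

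\emph{Antinegativity.} Each removed edge $(0,i)$ or $(i,i)$ is exactly the negative of the added $(0,-i)$ or $(-i,-i)$, so removing it frees that slot. The added $(-i,i)$ has negative $(i,-i)$. That $2$-tuple also has pseudoweight $k$, so it is not in $E_k(2)$, and it is not added for any other index $i'$.

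\emph{Balance and connectivity.} Balance follows because the removed edges form the path $0\to i\to i$ and the added edges form $0\to -i\to -i\to i$, so degree changes match at every vertex. Connectivity needs that vertex $i$ keeps some in- and out-edges, which I would verify by exhibiting surviving edges such as $(i,1)$ and $(1,i)$.

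The main obstacle is the pseudoweight bookkeeping around the edges of pseudoweight exactly $k$. I would handle it by explicitly listing, for $n=2$, which pairs $(a,b)$ have $f(a)+f(b)<k$, $=k$ and $>k$.
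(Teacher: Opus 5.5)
Your first paragraph is exactly the paper's proof. The paper obtains the theorem immediately from the preceding lemma on $U_k^{\prime}(1)$ together with Theorems~\ref{theorem:correspondence} and \ref{theorem:NAS_bound}, and your reduction (with $n=2$, so the bound is $(k^2-2^2)/2$) is correct.

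You should fix a false step in your re-check of the lemma's edge count. It is not true that only the four self-negative $2$-tuples have pseudoweight $k$. Every $(a,-a)$ with $a\neq 0$ has pseudoweight $a+(k-a)=k$; you use this yourself for $(-i,i)$ and $(i,-i)$. So there are $k+2$ tuples of pseudoweight exactly $k$, and $|E_k(2)|=(k^2-k-2)/2$, not $(k^2-4)/2$. For $k=4$ this is the five edges listed in the paper's example. With the correct count, the net gain of one edge for each of the $(k-2)/2$ values of $i$ gives exactly $(k^2-4)/2$. Your count would have exceeded the bound of Theorem~\ref{theorem:NAS_bound}, and that is what the discrepancy you noticed was signalling.

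Two small points on connectivity. For $i=1$ your witness $(1,i)=(1,1)$ is one of the removed edges, so use, e.g., $(1,0)$ instead. Also, vertex $-1$ has no edges at all in $U_k(1)$, since $f(k-1)+f(b)\geq k$. It is attached only through the added edges $(0,-1)$ and $(-1,1)$.
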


\begin{example}
Suppose $k=4$.  Then $U_4(1)$ contains five edges with labels:
\[ \{ (0,1), (1,0), (1,1), (1,2), (2,1)  \}. \]
Creating $U_4^{\prime}(1)$ involves removing edges $(0,1)$ and $(1,1)$ and inserting edges $(0,3)$,
$(3,3)$ and $(3,1)$, i.e.\ $U_4^{\prime}(1)$ contains the following edges:
\[  \{ (0,3), (1,0), (3,3), (3,1), (1,2), (2,1)  \}. \]
These yield a maximal $\mathcal{NAS}_4(2)$ of period 6, e.g.: $[0,3,3,1,2,1]$.
\end{example}

\section{Maximal sequences for even \texorpdfstring{$n\geq 4$}{n>=4}}  \label{section:maximal_neven}

To construct maximal negative avoiding sequences of even order  we apply the pre-image mapping
$D^{-1}$ to the subgraph $W_k(n-1)$ of Construction~\ref{the construction:n_odd} for $n$ odd and
adjoin to the resulting set of pre-image edges certain additional edges, as described below. We
begin with a lemma about $D^{-1}(W_k(n-1))$.
\begin{lemma}  \label{lemma:Wkn_properties even}
Suppose $n\geq3$ and $k\geq3$, where $n$ is odd. The subgraph
$D^{-1}(W_k(n-1))$ of $B_k(n)$ has the following properties:
\begin{itemize}
\item[(i)] The number of edges of $D^{-1}(W_k(n-1))$ is $\frac{k^{n+1}-k\delta^n}{2}$;
\item[(ii)] $D^{-1}(W_k(n-1))$ is antinegative;
\item[(iii)] $D^{-1}(W_k(n-1))$ is an Eulerian subgraph of the de Bruijn graph $B_k(n)$.
\end{itemize}
\end{lemma}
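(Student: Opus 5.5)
Parts (i) and (ii) follow directly from Lemma~\ref{construction:n_odd} and Lemma~\ref{lemma:Lempel_antisymmetry}. By Lemma~\ref{construction:n_odd}, $W_k(n-1)$ is an antinegative subgraph of $B_k(n-1)$ with edge set $F_k(n)$, where $|F_k(n)|=\frac{k^n-\delta^n}{2}$. Lemma~\ref{lemma:Lempel_antisymmetry} then says that $D^{-1}(W_k(n-1))$ is an antinegative subgraph of $B_k(n)$ with $k|F_k(n)|=\frac{k^{n+1}-k\delta^n}{2}$ edges. This gives (i) and (ii).

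For (iii) we need balance and connectivity. Balance is immediate: $W_k(n-1)$ is Eulerian, hence balanced, and Lemma~\ref{lemma:Lempel_antisymmetry} says each vertex $\mathbf{x}$ of the pre-image has the same in- and out-degree as $D(\mathbf{x})$ in $W_k(n-1)$. Connectivity is the real issue, since pre-images of a connected graph under $D$ can split into up to $k$ components.

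The plan for connectivity is as follows. The pre-images of $\mathbf{y}$ are translates $\mathbf{x}+c\mathbf{1}$, $c\in\mathbb{Z}_k$, where $\mathbf{1}=(1,\dots,1)$. Lifting a closed walk in $W_k(n-1)$ starting at a vertex $\mathbf{y}$ gives a walk in $D^{-1}(W_k(n-1))$ from $\mathbf{x}$ to $\mathbf{x}+\sigma\mathbf{1}$. Here $\sigma$ is the sum of the "new" symbols appended along the walk, i.e.\ the sum of the last coordinates of the edges traversed. So it suffices to do two things:
\begin{enumerate}
\item[(a)] Show that every vertex of the pre-image lies over some vertex of $W_k(n-1)$. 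This holds by construction, and since $W_k(n-1)$ is connected, any two pre-image vertices can be joined to the same fibre by lifting paths.
\item[(b)] Exhibit closed walks in $W_k(n-1)$ whose values of $\sigma$ generate $\mathbb{Z}_k$.
\end{enumerate}
Once (b) holds, all $k$ translates in a fibre lie in one component, and by (a) the whole pre-image is connected.

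For (b), the natural choice is a loop in $E_k(n)$. The $n$-tuple $(1,1,\dots,1)$ has pseudoweight $n<nk/2$, so it lies in $E_k(n)\subseteq F_k(n)$. It is a loop at the vertex $(1,\dots,1)$ of $B_k(n-1)$, and traversing it once gives $\sigma=1$, which generates $\mathbb{Z}_k$. Lifting this loop connects $\mathbf{x}$ to $\mathbf{x}+\mathbf{1}$ in $D^{-1}(W_k(n-1))$. Concretely, with $\mathbf{x}=(0,1,2,\dots,n-1)$, the preimage edge $(0,1,\dots,n)$ runs from $\mathbf{x}$ to $\mathbf{x}+\mathbf{1}$, and iterating reaches every translate. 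Combined with (a), this gives connectivity.

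The main obstacle is getting this lifting bookkeeping right: the exact correspondence between vertices and edges of $B_k(n)$ and $B_k(n-1)$ under $D$, and the formula showing that a lifted closed walk shifts by the sum of the appended difference symbols. The remaining check is that the pre-image graph's vertex set consists exactly of vertices with positive degree, so no isolated vertices need to be excluded; this follows since each pre-image vertex is defined as an endpoint of a pre-image edge.
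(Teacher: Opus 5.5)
Your proof is correct. For (i), (ii) and the balance half of (iii) it matches the paper: fibres of $D$ have size $k$, and $D^{-1}$ preserves antinegativity and vertex degrees.

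You differ from the paper on connectivity. The paper cites an external result (Theorem 5.5 of the earlier paper) that $D^{-1}(U_k(n-1))$ is connected. It then notes that each circuit of $\mathcal{C}^*_k(n-1)$ shares a vertex with $U_k(n-1)$, so every lift of such a circuit passes through the connected graph $D^{-1}(U_k(n-1))$.

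You instead give a self-contained path-lifting argument. An edge of $B_k(n-1)$ lifts uniquely from any preimage of its tail, and equally into any preimage of its head. A lifted closed walk is displaced by the sum of its appended symbols. The loop $(1,1,\dots,1)\in E_k(n)$ has displacement $1$, so its $k$ lifts $(c,c+1,\dots,c+n)$ form a single cycle through the whole fibre over $(1,\dots,1)$. Since $W_k(n-1)$ is Eulerian, hence strongly connected, every lifted vertex reaches that fibre.

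Your route avoids both the external theorem and the shared-vertex fact. It also proves something more general: $D^{-1}(T)$ is connected for any connected $T\subseteq B_k(n-1)$ containing a closed walk whose appended-symbol sum is a unit modulo $k$. In effect, you reprove the case of the cited theorem that is needed here. The paper's version is shorter, but it relies on the citation and on how the circuits of $\mathcal{C}^*_k(n-1)$ attach to $U_k(n-1)$.
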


\begin{proof}
 Property (i) follows from Lemma~\ref{lempel inverse size}.
Property (ii) follows because $D^{-1}$ preserves antinegativity. $W_k(n-1)$ is balanced so
$D^{-1}(W_k(n-1))$ has this property too. By Theorem 5.5 of
\cite{Mitchell26} $D^{-1}(U_k(n-1))$ is a connected subgraph of the de Bruijn graph $B_k(n)$. Since
each circuit of $\mathcal{C}^*_k(n-1)$ shares a vertex with $U_k(n-1)$ it follows that
$D^{-1}(W_k(n-1))$ is also connected. Hence (iii) holds.
\end{proof}

Let $T_k(n-1)$ be the subgraph of $B_k(n-1)$ whose edges correspond to  all the self-negative
$n$-tuples, i.e.\ edges corresponding to $n$-tuples with all entries equal to $0$ or $k/2$, and
whose vertices are those on such edges. When $k$ is odd $T_k(n-1)$ consists of one edge on one
vertex (a loop). When $k$ is even $T_k(n-1)$ consists of $2^n$ edges on $2^{n-1}$ vertices and is
isomorphic to $B_2(n-1)$. So $T_k(n-1)$ is Eulerian. Let $\mathbf a=[a_0,a_1,\dots,a_m]$ be an
Eulerian circuit in $T_k(n-1)$, where $m=n-1$ (and $a_i=0, i=0,1,\dots,k-1$) if $k$ is odd and
$m=2^n-1$ if $k$ is even. Now, since $\sum_{i=0}^m a_i=0$ as $k/2$ occurs an even number of times,
$D^{-1}(\mathbf a)$ can be partitioned into $k$ circuits of $B_k(n)$, which may be written as
$\mathbf{b}_x=[x,x,\dots,x]$ (of length $n+1$) if $k$ is odd, and
$\mathbf{b}_x=[x+b_0,x+b_1,\dots,x+b_{2^n-2},x]$ (of length $2^n$) where $b_i=\sum_{j=0}^{i} a_j$
if $k$ is even, where $x\in\mathbb{Z}_k$.

Let $Z_k(n)$ be the subgraph of $B_k(n)$ whose edges are the edges of $D^{-1}(W_k(n))$ and the
edges of  $ \mathbf{a}_x$ for $0<x<k/2$.

\begin{theorem}  \label{theorem:maximal_NAS_neven_kodd}
Suppose $k\geq3$ and $n\geq3$ is odd. Then $Z_k(n)$ is an Eulerian subgraph of $B_k(n)$ and there
exists a maximal $\mathcal{NAS}_k(n+1)$.
\end{theorem}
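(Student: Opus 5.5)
The plan is to verify directly the three properties needed to apply Theorem~\ref{theorem:correspondence}: $Z_k(n)$ is balanced and connected, it is antinegative, and it has exactly $\frac{k^{n+1}-\delta^{n+1}}{2}$ edges. Maximality then follows from Theorem~\ref{theorem:NAS_bound} with $n+1$ in place of $n$. Here I read the definition of $Z_k(n)$ as the union of the edges of $D^{-1}(W_k(n-1))$ and the edges of the circuits $\mathbf{b}_x$ for $0<x<k/2$. These circuits partition $D^{-1}(\mathbf a)$, so they are pairwise edge-disjoint. They are also disjoint from $D^{-1}(W_k(n-1))$, because $D$ maps their edges to self-negative $n$-tuples, and none of these lies in $F_k(n)$.

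\emph{Edge count.} By Lemma~\ref{lemma:Wkn_properties even}(i), $D^{-1}(W_k(n-1))$ contributes $\frac{k^{n+1}-k\delta^n}{2}$ edges.
\begin{itemize}
\item For $k$ odd, each $\mathbf{b}_x$ is the single loop $(x,x,\dots,x)$. The $(k-1)/2$ values of $x$ add $(k-1)/2$ edges, giving $\frac{k^{n+1}-k+k-1}{2}=\frac{k^{n+1}-1}{2}$.
\item For $k$ even, each $\mathbf{b}_x$ has $2^n$ edges. The $(k-2)/2$ values of $x$ add $2^{n-1}(k-2)$ edges, giving $\frac{k^{n+1}-k2^n+k2^n-2^{n+1}}{2}=\frac{k^{n+1}-2^{n+1}}{2}$.
\end{itemize}
In both cases this is exactly the bound of Theorem~\ref{theorem:NAS_bound} for span $n+1$.

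\emph{Antinegativity.} The computation in the proof of Lemma~\ref{lemma:Lempel_antisymmetry} shows that $D(-\mathbf{e})=-D(\mathbf{e})$ for every edge $\mathbf{e}$. We check three kinds of pairs.
\begin{itemize}
\item Two edges of $D^{-1}(W_k(n-1))$: these are handled by Lemma~\ref{lemma:Wkn_properties even}(ii).
\item An edge $\mathbf e$ of some $\mathbf b_x$ and an edge of $D^{-1}(W_k(n-1))$: $D(-\mathbf e)$ is the negative of a self-negative tuple, hence self-negative, and so it is not in $F_k(n)$. Therefore $-\mathbf e\notin D^{-1}(W_k(n-1))$.
\item Edges within the circuits $\mathbf b_x$: each $b_i$ is a sum of entries $0$ and $k/2$, so $-b_i=b_i$. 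Hence $-\mathbf{b}_x=\mathbf{b}_{-x}$. Since $0<x<k/2$ forces $k/2<-x<k$, the circuit $\mathbf b_{-x}$ is not selected. This also excludes $\mathbf b_x$ meeting its own negative, because $x\neq -x$ and the $\mathbf b_y$ are disjoint.
\end{itemize}

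\emph{Eulerian property.} $Z_k(n)$ is balanced, since it is the union of the balanced graph $D^{-1}(W_k(n-1))$ (Lemma~\ref{lemma:Wkn_properties even}(iii)) and edge-disjoint circuits. Connectivity is the step that needs an actual argument, and I expect it to be the main obstacle: each $\mathbf b_x$ must share a vertex with the connected graph $D^{-1}(W_k(n-1))$.

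The plan is to show that every vertex $\mathbf v$ of $T_k(n-1)$ lies in $U_k(n-1)$. Such a $\mathbf v$ is an $(n-1)$-tuple with entries in $\{0,k/2\}$ (just $\mathbf 0$ when $k$ is odd), and its pseudoweight is $(n-1)k/2$. Appending a $1$ gives an $n$-tuple of pseudoweight $(n-1)k/2+1<nk/2$ because $k\geq3$. That $n$-tuple lies in $E_k(n)$ and has $\mathbf v$ as its initial vertex, so $\mathbf v$ has positive degree in $W_k(n-1)$.

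By the degree statement of Lemma~\ref{lemma:Lempel_antisymmetry}, every vertex $\mathbf y$ of $B_k(n)$ with $D(\mathbf y)=\mathbf v$ then has positive degree in $D^{-1}(W_k(n-1))$. The vertices of $\mathbf b_x$ are exactly such preimages of vertices of $T_k(n-1)$, so each $\mathbf b_x$ is attached to $D^{-1}(W_k(n-1))$. Hence $Z_k(n)$ is connected, and therefore Eulerian.

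Theorem~\ref{theorem:correspondence} now yields an $\mathcal{NAS}_k(n+1)$ of period $\frac{k^{n+1}-\delta^{n+1}}{2}$, which is maximal by Theorem~\ref{theorem:NAS_bound}.
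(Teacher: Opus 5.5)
Your proof is correct and follows the same route as the paper's: the edge count from Lemma~\ref{lemma:Wkn_properties even}(i), balance and connectivity from Lemma~\ref{lemma:Wkn_properties even}(iii) by attaching the circuits $\mathbf{b}_x$, and antinegativity, with the paper's typos $W_k(n)$, $\mathbf{a}_x$ and $\delta^n$ correctly read as $W_k(n-1)$, $\mathbf{b}_x$ and $\delta^{n+1}$. You also supply steps the paper leaves implicit: the antinegativity check for pairs involving the added circuits (via $-\mathbf{b}_x=\mathbf{b}_{-x}$ and the self-negativity of $D(\mathbf{e})$), which the paper's appeal to Lemma~\ref{lemma:Wkn_properties even}(ii) alone does not cover, and the pseudoweight argument showing that every vertex of each $\mathbf{b}_x$ has positive degree in $D^{-1}(W_k(n-1))$.
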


\begin{proof}
By Lemma~\ref{lemma:Wkn_properties even}(i) the number of edges of $Z_k(n)$ is

\[ \frac{k^{n+1}-k\delta^n}{2}+\frac{k-\delta}{2}\delta^n=\frac{k^{n+1}-\delta^{n+1}}{2},\]
 where, as before, $\delta=1$ if $k$ is odd and $\delta=2$ if $k$ is even, since each
circuit $\mathbf{a}_x$ contains $\delta^n$ edges. Moreover,
$Z_k(n)$ is an Eulerian subgraph of the de Bruijn graph $B_k(n)$ from
Lemma~\ref{lemma:Wkn_properties even}(iii) and the fact that the circuits $\mathbf{a}_x$ are
balanced and share a vertex with $D^{-1}(W_k(n))$. Also, it is antinegative from
Lemma~\ref{lemma:Wkn_properties even}(ii).  Hence there exists an Eulerian circuit in  $Z_k(n)$,
which corresponds to an $\mathcal{NAS}_k(n+1)$ of period $\frac{k^{n+1}-\delta^n}{2}$, i.e.\ a
maximal $\mathcal{NAS}_k({n+1})$.
\end{proof}

The above theorem completes the project of demonstrating the existence of a maximal
$\mathcal{NAS}_k(n)$ for every $n\geq2$ and every $k\geq3$.

\begin{example}
As an example of the above construction, suppose $k=n=3$.  From Examples~\ref{example:E33} and
\ref{example:n3k3}, we can set $W_3(2)$ to be the subgraph containing the 13 edges corresponding to
the following $3$-tuples:
\[ 111,  011,  101,  110,  001,  010,  100,  112,  121,  211,  012,  120,  201. \]
Then $D^{-1}(W_3(2))$ contains the following 39 $4$-tuples:
\begin{align*}
0120, 0012, 0112, 0122, 0001, 0011, 0111, 0121, 0101, 0201, 0010, 0100, 0220, \\
1201, 1120, 1220, 1200, 1112, 1122, 1222, 1202, 1212, 1012, 1121, 1211, 1001, \\
2012, 2201, 2001, 2011, 2220, 2200, 2000, 2010, 2020, 2120, 2202, 2022, 2112.
\end{align*}
$Z_3(3)$ contains one additional edge, namely $\mathbf{a}_1=1111$, giving a set of 40 4-tuples that
can be joined to create a maximal $\mathcal{NAS}_3(4)$.
\end{example}

\section{Conclusions and open questions}    \label{section:conclusions}

We have introduced a special category of cut-down de Bruijn sequences, the negative
avoiding sequences, and gave an upper bound on their period.  We then showed by construction that
the bound is tight for all $n\geq2$ and $k\geq3$.

The sequences under study are an example of a special category of cut-down de Bruijn sequences, as
are also the orientable and negative orientable sequences.  For $n\geq2$ and $k\geq2$, these
sequences can be characterised as follows.

\begin{definition}
Suppose $p$ is a self-inverse permutation (i.e.\ satisfying
$p^{-1}=p$) on the set of $k$-ary $n$-tuples, i.e.\ on the set $\mathbb{Z}_k$. Then a
\emph{pair-selecting sequence} for the permutation $p$ is a $k$-ary cut-down de Bruijn sequence of
span $n$ with the property that if the $n$-tuple $\mathbf{a}$ occurs in the sequence then
$p(\mathbf{a})$ does not.
\end{definition}

It is simple to see that orientable, negative orientable and negative avoiding sequences are all
examples of pair-selecting sequences for appropriate choices of $p$:
\begin{itemize}
\item for a negative avoiding sequence, $p$ is defined so that $p(\mathbf{a})=-\mathbf{a}$;
\item for an orientable sequence, $p$ is defined so that $p(\mathbf{a})=\mathbf{a}^R$;
\item for a negative orientable sequence, $p$ is defined so that $p(\mathbf{a})=-\mathbf{a}^R$;
\end{itemize}
where for any $n$-tuple $\mathbf{a}$, $\mathbf{a}^R$ denotes the $n$-tuple obtained by reversing
the order of its entries.  It should thus be clear that these three categories of sequence are
closely related in some sense, although, unlike the case for negative avoiding sequences, the
problem of determining the maximum possible period of orientable and negative orientable sequences
remains unresolved for all but small $n$ (see, for example, \cite{Mitchell26a}).

More generally, one might ask whether it is possible to obtain more general results on such
sequences applying for large classes of permutation $p$.  One obvious result is that the upper
bound on the period of such a sequence will always be $\frac{k^n-f^n}{2}$, where $f$ is the number
of fixed points of the permutation $p$.  Of course, this bound will not always be sharp, as is
demonstrated by orientable and negative orientable sequences for which smaller upper bounds can be
derived.

A special case of this question would be where the permutation $p$ derives from a
self-inverse permutation of the elements of $\mathbb{Z}_k$, as is the case
for negative avoiding sequences. It would interesting to know whether the trivial upper bound on
the period is always achievable in this case.

The simplest case of this is when $k=2$ and $p$ simply switches 0 to 1 and vice versa; in this case
there are no fixed points, i.e.\ $f=0$, and hence a maximal sequence has period $2^{n-1}$.
If $S$ is such a maximal sequence, then $p(S)$, i.e.\ $S$ with every bit switched, is
clearly also a maximal sequence, and between them $S$ and $p(S)$ contain every possible $n$-tuple
exactly once in a period.  We refer to such a pair of maximal sequences as a \emph{complementary
pair}.

If $n$ is odd then a simple construction involves the set of binary $n$-tuples of Hamming weight
less than $n/2$. That this set defines an Eulerian subgraph of $B_2(n-1)$ was established by Ruskey
at al.\ \cite{Ruskey12}, as observed in Section~\ref{section:motivation}. The size of the set is
clearly $2^{n-1}$ and hence it can be used to create a maximal period sequence.

However, for $n$ even, life is not quite so simple. No such sequence exists for $n=2$ and, for
$n=4$, $[11110010]$ is a unique example of a maximal sequence (unique up to reversal and switching
zeros and ones). Sequences for every $n\geq4$ can be derived using the inverse Lempel homomorphism,
as discussed --- using different terminology --- by Lempel \cite[Section III]{Lempel70}.  In fact,
as per \cite[Corollary 1]{Lempel70}, there is a 1-1 correspondence between even weight de Bruijn
sequences of span $n$ and complementary pairs of sequences with span $n+1$, and, of course, every
de Bruijn sequence of span greater than 1 has even weight.

\providecommand{\bysame}{\leavevmode\hbox to3em{\hrulefill}\thinspace}
\providecommand{\MR}{\relax\ifhmode\unskip\space\fi MR }
% \MRhref is called by the amsart/book/proc definition of \MR.
\providecommand{\MRhref}[2]{%
  \href{http://www.ams.org/mathscinet-getitem?mr=#1}{#2}
} \providecommand{\href}[2]{#2}

%\bibliographystyle{plain}
%\bibliographystyle{amsplain}
%\bibliography{Coding}

\end{document}